  \def\@Opargbegintheorem#1#2#3#4{#4\trivlist
      \item[]{#3#2\@thmcounterend\ }}%
  \def\@Opargbegintheorem#1#2#3#4{#4\trivlist
      \item[\hskip\labelsep{#3#1}]{#3(#2)\@thmcounterend\ }}%
 \newtheorem{thm}{Theorem}[subsection]
 \newtheorem{cor}[thm]{Corollary}
 \newtheorem{lem}[thm]{Lemma}
 \newtheorem{defn}{Definition}[subsection]
 \newtheorem{exmp}{Example}[subsection]
 \newtheorem{rema}{Remark}[subsection]
 \newtheorem*{sthm}{Theorem}
\newcommand{\lcm}{{\text{lcm}}}
\providecommand{\keywords}[1]
{
	\small	
	\textbf{\text{Keywords:}} #1
}
\providecommand{\subclass}[1]
{
	\small	
	\textbf{\text{MSC(2020):}} #1
}
\begin{document}

	\title{\Large Further Generalization of Ramanujan Sums \\ with Regular $A$-Functions}
    \author{\normalsize Udvas Acharjee ,  N. Uday Kiran\\
    \begin{small}Department of Mathematics and Computer Science\end{small}\\
	\small Sri Sathya Sai Institute of Higher Learning, Puttaparthi, India.\thanks{Dedicated to Bhagawan Sri Sathya Sai Baba.}}
	\maketitle	
		
	\begin{abstract}
          
         In the study of Ramanujan sums, the so-called regular $A$-function is a set-valued multiplicative function that tracks certain subsets of the divisor sets of natural numbers. McCarthy provided a generalization of the Ramanujan sum using these regular $A$-function based arithmetic convolutions. This approach has recently attracted considerable interest from several researchers. In this paper, we extend McCarthy's generalization by introducing two regular $A$-functions corresponding to both parameters in the Ramanujan sum. Fortunately, these sums exhibit several properties of the Ramanujan sums. We also generalize the greatest common divisor (GCD) function and the Von Sterneck formula. Our introduction of two regular $A$-functions into these expressions enables us to explore a novel perspective on the connection between these expressions and the order relation between the two regular $A$-functions. In particular, we establish the necessary and sufficient conditions for orthogonality and Dedekind-Hölder's identity (i.e., Ramanujan sum = Von Sterneck function) to hold. Our primary motivation for this further generalization proposed in this paper is expansions of arithmetic functions based on arbitrary regular $A$-functions. To the best of our knowledge, the expansions of arbitrary $A$-functions discussed here are new in the literature.

		\keywords{Ramanujan sums $\cdot$ Regular $A$-functions $\cdot$ Arithmetical functions based on regular $A$-functions $\cdot$ Ramanujan expansion of arithmetic function $\cdot$ Dedekind-H\"{o}lder Identity}
        
		\subclass{11A25 $\cdot$ 11N64 $\cdot$ 11N37}
	\end{abstract}
\section*{Introduction}
In the literature, various generalizations of the Ramanujan sums have been explored, each depending on the specific choice of divisors selected for their parameters in the Dirichlet convolution. Various researchers have explored and interpreted these sums in several interesting and innovative ways. Works by Cohen \cite{cohen_ext,Cohen_1955_Additive_properties,Cohen_connections_with_totient_functions,Cohen1960_UnitaryRamSum}, Sugunamma \cite{Sugunamma-generalization}, McCarthy \cite{McCarthy1968,McCarthy_regular_arithmetical_convolution}, Ramaiah \cite{Ramaiah1978}, and T\'{o}th \cite{Recent_Progress,Toth_Modified_Ramsums}, along with many researchers, have shown that a slight change in the definition of the sum not only offers great flexibility but also retains the powerful properties of these sums. It is known from the work of Narkiewicz \cite{Narkiewicz1963} that a general type of divisor set that preserves the properties of the Dirichlet convolution is the regular divisor set, and the associated convolution is termed the regular convolution. The regular divisor set, which is more recently termed as the Regular $A$-function \cite{semi-regularconvolution,Burnett_A_function}, provides a reasonably broad framework for several divisibility relations including the unitary divisors and the complete set of divisors.

The regular $A$-function $A:\mathbb{N}\rightarrow 2^\mathbb{N}$ is a set-valued multiplicative function with $\{1\}\subseteq A(n)\subseteq D(n)$, where $D(n)$ is the divisor set of $n$. We denote the collection of regular $A$-functions by $\mathbb{A}$. In the context of regular $A$-functions, McCarthy \cite{McCarthy1968,McCarthy_regular_arithmetical_convolution} defined the generalization of the Ramanujan sum $ c_{A}(m,n)$, considering divisors of $n$ from $A(n)$ for a specific choice of $A$. In \cite{Burnett_A_function}, the authors extended this generalization to the case of $ A $-functions being semi-regular.

In this paper, we extend the McCarthy's generalization $c_{A}(m,n)$ to the following generalization:
\begin{equation}\label{eq:A1A2Ramanujan sum}
    C_{(A_1,A_2)}(m,n)=\sum_{d\in A_1(m)\cap A_2(n)}d\mu_{A_2}\left(\frac{n}{d}\right),
\end{equation}    
where $A_{1},A_{2}\in \mathbb{A}$ and $\mu_{A}(\cdot)$ is the M\"{o}bius function associated with $A$. This generalization offers a broad framework for various existing Ramanujan sum generalizations in the literature. Indeed, if $A_{1}=D$, the divisor set, we recover $c_{A}(m,n)$. When $A_1=A_2=U$, the unitary divisor set, we obtain the modified unitary Ramanujan sum $\tilde{c}(m,n)$ given by T\'{o}th \cite{Toth_Modified_Ramsums}. We demonstrate that the above generalization retains several properties of the original Ramanujan sum, such as integrality, multiplicativity, orthogonality and so on. Along similar lines, we also generalize the gcd function as $\textnormal{gcd}_{(A_1, A_2)}(m, n)$, and thereby the Von Sterneck formula $ \Phi_{(A_1, A_2)}(m, n)$ (see Equation \ref{Von-Sterneck eq}).

Through the extension (\ref{eq:A1A2Ramanujan sum}), we can introduce a novel perspective on the relationship between \( A_1 \) and \( A_2 \). Specifically, we consider the natural order relation:
$$
A_2 \leq A_1 \quad \text{defined by} \quad A_2(n) \subseteq A_1(n) \text{ for all } n \in \mathbb{N}.
$$
Clearly, this relation forms a poset and the associated lattice $\mathbb{A}$ is distributive (see Theorem \ref{thm:lattice}). For $A_{2}\leq A_{1}$, McCarthy\cite{McCarthy1968} expressed $c_{A_{1}}(m,n)$ in terms of $c_{A_{2}}(m,n)$. In this work, we take a different line in terms of comparison between regular $A$-functions associated with the same sum. For instance, a comparison on the two regular $A$-functions leads to an elegant necessary and sufficient condition for the equality of generalized Ramanujan sum and Von Sterneck function. 

\begin{sthm}[\textnormal{\textbf{Generalized Dedekind-H\"{o}lder Identity}}]
    For $A_{1},A_{2}\in \mathbb{A}$,  $A_{2}\leq A_{1}$ is necessary and sufficient for the identity:  
    $
     C_{(A_1, A_2)}(m, n)=\Phi_{(A_1, A_2)}(m, n).
    $
\end{sthm}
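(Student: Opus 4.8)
The plan is to reduce the claimed equivalence to a purely local statement on prime powers, and then to determine exactly when the two quantities can fail to coincide there. First I would note that since $A_1,A_2$ are multiplicative set-valued functions, $A_1(m)\cap A_2(n)$ decomposes as a direct product over the coprime blocks of $(m,n)$; together with the multiplicativity of $\mu_{A_2}$ and of the $A_2$-analogue $\phi_{A_2}$ of Euler's totient, this makes $C_{(A_1,A_2)}(m,n)$, $\textnormal{gcd}_{(A_1,A_2)}(m,n)$, and hence $\Phi_{(A_1,A_2)}(m,n)$, all multiplicative across coprime blocks, while $A_2\le A_1$ is equivalent to $A_2(p^c)\subseteq A_1(p^c)$ for every prime power $p^c$. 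So the theorem reduces to showing that $C_{(A_1,A_2)}(p^a,p^b)=\Phi_{(A_1,A_2)}(p^a,p^b)$ for all primes $p$ and all $a,b\ge1$ if and only if $A_2(p^c)\subseteq A_1(p^c)$ for all $c$.

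The main work is the prime-power computation. I would use the standard structure of a regular $A$-function on prime powers: $A(p^c)=\{1,p^{t},p^{2t},\dots,p^{c}\}$ for a divisor $t=t_A(p^c)$ of $c$, with the closure property that $p^b\in A(p^c)$ forces $t_A(p^b)=t_A(p^c)$, together with $\mu_A(1)=1$, $\mu_A(p^e)=-1$ exactly when $p^e$ is $A$-primitive (that is, $t_A(p^e)=e$) and $\mu_A(p^e)=0$ otherwise, and $\phi_A(p^e)=p^e-p^{\,e-t_A(p^e)}$ for $e\ge1$. Writing $s=t_{A_1}(p^a)$, $t=t_{A_2}(p^b)$ and $\ell=\lcm(s,t)$, one finds $A_1(p^a)\cap A_2(p^b)=\{p^{j}:\ell\mid j,\ 0\le j\le\min(a,b)\}$, so $\textnormal{gcd}_{(A_1,A_2)}(p^a,p^b)=p^{r\ell}$ with $r=\lfloor\min(a,b)/\ell\rfloor$. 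Since every exponent $j$ in the defining sum is a multiple of $t$ and lies in $[0,b]$, the factor $\mu_{A_2}(p^{b-j})$ can be nonzero only for $b-j\in\{0,t\}$, so $C_{(A_1,A_2)}(p^a,p^b)$ collapses to at most two terms; and evaluating the closed form $\Phi_{(A_1,A_2)}(p^a,p^b)=\mu_{A_2}(p^{e})\,\phi_{A_2}(p^b)/\phi_{A_2}(p^{e})$ with $e=b-r\ell$, using $\phi_{A_2}(p^b)=p^b-p^{b-t}$ and $(p^b-p^{b-t})/(p^t-1)=p^{b-t}$, yields an equally short closed form. A case analysis in $e$ (note $t\mid e$, so $e\in\{0,t,2t,\dots\}$) then shows that the two sides coincide in every case except the single configuration $e=0$ — equivalently $p^b\in A_1(p^a)$, that is $s\mid b$ and $b\le a$ — combined with $s\nmid t$; in that case $C_{(A_1,A_2)}(p^a,p^b)=p^b$ while $\Phi_{(A_1,A_2)}(p^a,p^b)=p^b-p^{b-t}$, and these differ since $p^{b-t}\ge1$.

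Finally I would match this lone obstruction to the order relation. For sufficiency: if $A_2\le A_1$ then $t_{A_1}(p^c)\mid t_{A_2}(p^c)$ for all $p,c$; and whenever $p^b\in A_1(p^a)$ the closure property gives $s=t_{A_1}(p^a)=t_{A_1}(p^b)$, hence $s\mid t_{A_2}(p^b)=t$, so the bad configuration never occurs and $C_{(A_1,A_2)}(m,n)=\Phi_{(A_1,A_2)}(m,n)$ for all $m,n$. For necessity: if $A_2\not\le A_1$, pick a prime power $p^c$ with $A_2(p^c)\not\subseteq A_1(p^c)$, i.e. $t_{A_1}(p^c)\nmid t_{A_2}(p^c)$, and take $m=n=p^c$; then $p^c\in A_1(p^c)$, $\ell\mid c$, and $s\nmid t$, putting us precisely in the bad configuration, so $C_{(A_1,A_2)}(p^c,p^c)=p^c\ne p^c-p^{c-t}=\Phi_{(A_1,A_2)}(p^c,p^c)$. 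The step I expect to be the real obstacle is the middle paragraph: carrying out the prime-power case analysis for both $C_{(A_1,A_2)}$ and $\Phi_{(A_1,A_2)}$ exhaustively and, above all, identifying correctly which of the prime powers $p^{b-j}$ along the progression are $A_2$-primitive, so that $\mu_{A_2}$ contributes $-1$ rather than $0$ — this is exactly where the closure property of regular $A$-functions is essential. Once that local picture is in hand, the reduction to coprime blocks and to the diagonal pairs $(p^c,p^c)$, with one more appeal to closure, makes both implications short.
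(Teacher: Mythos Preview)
Your proposal is correct and follows essentially the same route as the paper: reduce to prime powers via multiplicativity of $C_{(A_1,A_2)}$, $\Phi_{(A_1,A_2)}$, and the order relation, then carry out an explicit case analysis using the structure of $A_2(p^b)$ and the values of $\mu_{A_2}$, isolating the unique obstruction $p^b\in A_1(m)$ with $p^{b-t}\notin A_1(m)$ (your ``$e=0$, $s\nmid t$'' case). The paper presents the two directions separately---first verifying equality under the hypothesis $A_2\le A_1$, then exhibiting a failing pair when $A_2\nleq A_1$---whereas you do a single unified case analysis that pins down the discrepancy directly; your necessity witness $m=n=p^c$ is exactly the construction implicit in the paper's existence claim.
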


Since H\"{o}lder \cite{Hölder_1936}, several generalizations of the Ramanujan sum have been associated with a generalization of the Von Sterneck function by Cohen \cite{Cohen_connections_with_totient_functions}, Suryanarayana \cite{DSuryaNarayana_VonSterneck}, and several others, including, very recently, Huang \cite{huang2008generalization}. Our result generalizes McCarthy \cite[Theorem 1]{McCarthy_regular_arithmetical_convolution}. Interestingly, although our Ramanujan sum generalization matches the Von Sterneck form, we show it cannot equal a similar generalization of trigonometric sum unless $A_1 = D$ as considered by McCarthy (see  \cite{McCarthy1968,semi-regularconvolution}).

Our motivation for (\ref{eq:A1A2Ramanujan sum}) is to obtain expansions of arithmetic functions based on regular $A$-functions.  For example, consider the arithmetic function based on $A$:
$$
g_{A}(n) = \frac{1}{n}\sum_{d \in A(n)}d.
$$
Using our sums we have the following neat expansion for the arbitrary regular $A$-function:
 $$
 g_{A}(n)=\zeta(2)\sum_{\substack{q=1}}^\infty\frac{\varphi_2(q)}{q^{4}}C_{(A,U)}(n,q),
 $$ 
 where $C_{A,U}(n,q)$ is given in (\ref{eq:A1A2Ramanujan sum}) and $U$ is the unitary divisor set. Note that the above expansion is for an arbitrary $A$. If $A=U$ then we obtain the sum in \cite[pg. no. 3]{Toth_Modified_Ramsums}. On the other hand, $ g_A $ doesn't have any simple expansions in terms of the existing generalizations in the literature. Indeed, consider two distinct primes $ p $ and $ q $, and a regular $ A $-function given by the unitary structure on $ p $: $ A(p^k) = \{ 1, p^k \} $ and the divisor set on $ q $: $ A(q^k) = \{ 1, q, \dots, q^k \} $ for $ k > 0 $. Then, using the Dirichlet convolution $( * )$ and the unitary convolution $( \times )$, one can show:
$$
(\mu * g_A)(p^r) = \frac{1}{p^r} - \frac{1}{p^{r-1}} \quad \text{and} \quad (\mu^* \times g_A)(q^r) = \frac{1}{q^r} + \frac{1}{q^{r-1}} + \dots + \frac{1}{q},
$$
for $ r > 1 $. Therefore, a simple arithmetic expansion cannot be obtained for $ g_A $, either through the original Ramanujan sum \cite{Ramanujan_expansion} (see \cite[Theorem 4.3]{T_th_2017_multivariable}) or the modified unitary Ramanujan sum \cite{Toth_Modified_Ramsums}, due to the presence of lower-order terms with different coefficients.

Results on Ramanujan sum expansions of standard multivariable arithmetic functions and their unitary analogues exist in the literature (see \cite{T_th_2017_multivariable,Toth_Modified_Ramsums,Cohen1960_UnitaryRamSum}). To the best of our knowledge, the expansions of arbitrary $A$-functions discussed here are new in the literature.

The rest of the paper is organized as follows: In Section \ref{defn of generalization and properties}, we define the generalization of Ramanujan sums and explore its properties, including a generalization of the gcd function using two regular $A$-functions. We also investigate properties influenced by the partial ordering on regular $A$-functions (Section \ref{ordering induced properties}). In Section \ref{exp_gen}, we apply these sums to the expansion of multivariable arithmetic functions, following the work of Delange \cite{Delange1976OnRE}, Ushiroya \cite{Ushiyora_expansion}, and Toth \cite{T_th_2017_multivariable}. 
\section{Generalization of Ramanujan sum}\label{defn of generalization and properties}
The regular divisor sets were utilized by Narkiewicz \cite{Narkiewicz1963} to define the regular convolution. Subsequently, McCarthy \cite{McCarthy1968} incorporated this idea into the definition of Ramanujan sums, thereby obtaining a generalization that retains several properties identical to the classical Ramanujan sum. In \cite{Burnett_A_function} the regular $A$-function is defined as follows,
\begin{defn}\label{regular A function}
Let $A: \mathbb{N} \to 2^{\mathbb{N}}$ be such that $ \{1\} \subseteq A(n) \subseteq D(n) $, where $ D(n) = \{ d : d \mid n \} $, and the function $ A $ satisfies the following properties:

\begin{enumerate}
    \item (Multiplicativity) $ A(mn) = A(m) A(n) $ for $ \gcd(m,n) = 1 $.
    \item $ A(p^k) = \{ 1, p^t, p^{2t}, \dots, p^k \} $ for some $ t \mid k $ and $ p $ prime.
    \item If $ p^b \in A(p^k) $ for a prime $ p $ and $ b \geq 1 $, then $ A(p^b) = \{ 1, p^t, p^{2t}, \dots, p^b \} $.
\end{enumerate}

A number $n$ is said to be primitive in $A$ or $A$-primitive if $A(n)=\{1,n\}$. We also denote $\tau_A(\cdot)$ defined over prime powers to be the ``type" function (see \cite{Narkiewicz1963}) such that $p^{\tau_A(p^k)}$ is the primitive element in $A(p^k)$. 
\end{defn}

Clearly, the divisor set function $D$ is an example of a regular $A$-function. Another important example of a regular $A$-function is the unitary $A$-function $U$ defined as $U(n)=\{d: d\mid n, \gcd(d,n/d)=1\}$. We denote $d\in U(n)$ by $d\parallel n$. Cohen \cite{Cohen1960_UnitaryRamSum} introduced the Ramanujan sum based on $U$. It is known that regular $A$-functions generalizes the properties $D$ and $U$. In \cite{Narkiewicz1963} the regular convolution with the regular $A$-function $A$ is defined as:\begin{equation}\label{regular convolution}
f\times_Ag(n):=\sum_{d\in A(n)}f(d)g(n/d).    
\end{equation}
The following results are from \cite[Section 3]{Narkiewicz1963}.
\begin{thm}\label{regular convolution remark}
    The ring of arithmetic functions with ordinary addition and the regular convolution in Equation \ref{regular convolution}  associated with the regular $A$-function (Definition \ref{regular A function}) as multiplication, has the following properties:
    \begin{enumerate}
        \item It is associative, commutative and possesses a unit element defined by \[
        \iota(n)=\begin{cases}
            1,&\text{ if }n=1,\\
            0,&\text{otherwise}.
        \end{cases}
        \]
        \item The regular convolution preserves multiplicativity.
        \item The m\"{o}bius function $\mu_A$ given by the equation,\[
        1\times_A\mu_A=\iota,
        \] assumes for prime powers only the values $0,1$ and $-1$. More specifically,
        \begin{equation*}
    \mu_A(p^a)=\begin{cases}
        1,&\textnormal{ for }a=0\\
        -1, &\textnormal{ for }p^a \text{ is $A$-primitive}\\
        0, &\textnormal{ otherwise}
    \end{cases}
\end{equation*}
    \end{enumerate}
\end{thm}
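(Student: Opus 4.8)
The plan is to treat the four assertions in turn, in each case reducing to the prime-power structure in parts (2) and (3) of Definition \ref{regular A function} by means of the multiplicativity in part (1). Commutativity and the unit are quickest, so I would dispatch them first. The key preliminary fact is the \emph{complementation} property: if $d\in A(n)$ then $n/d\in A(n)$. On a prime power this is immediate, since $A(p^k)=\{1,p^t,\dots,p^k\}$ is closed under $p^{jt}\mapsto p^{(k/t-j)t}$; the general case follows from property (1), because the coprime factorization of $d$ over the prime powers dividing $n$ is complemented factorwise. Given this, commutativity follows by the substitution $d\mapsto n/d$ in the defining sum \eqref{regular convolution}, and $\iota$ is a two-sided unit because $\iota(n/d)\neq 0$ only at $d=n$, with $n\in A(n)$ always.

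Associativity is the substantive part. Expanding $\big((f\times_A g)\times_A h\big)(n)$ and $\big(f\times_A(g\times_A h)\big)(n)$ each yields a sum of products $f(e)g(a)h(b)$ over triples $(e,a,b)$ with $eab=n$, subject respectively to the membership conditions [A] $ea\in A(n)$ and $e\in A(ea)$, and [B] $e\in A(n)$ and $a\in A(n/e)$. It therefore suffices to prove that these two families of triples coincide, after which the two sums agree term by term. Both conditions decompose primewise by property (1) (membership in $A(n)$ is determined on each prime power), so the set identity reduces to $n=p^k$. Writing $t=\tau_A(p^k)$, I claim each family then equals $\{(p^{jt},p^{lt},p^{mt}): j,l,m\ge 0,\ j+l+m=k/t\}$. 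This is exactly where regularity enters: property (3) guarantees that every $p^{it}\in A(p^k)$ satisfies $A(p^{it})=\{1,p^t,\dots,p^{it}\}$ \emph{with the same type} $t$, so both the inner set $A(ea)$ in [A] and the set $A(n/e)$ in [B] are again generated by $p^t$; in each case the conditions force all three exponents to be multiples of $t$. Verifying this common description, and hence the bijection between the two triple families, is the main obstacle; once it is in hand, associativity follows for arbitrary $f,g,h$.

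For item 2, let $f,g$ be multiplicative and $\gcd(m,n)=1$. Property (1) provides the bijection $A(mn)\ni d\leftrightarrow(d_1,d_2)$ with $d_1\in A(m)$, $d_2\in A(n)$, $d=d_1d_2$, and $\gcd(d_1,d_2)=1$. Substituting into \eqref{regular convolution} and using the multiplicativity of $f$ and $g$ on the coprime pieces $d_1d_2$ and $(m/d_1)(n/d_2)$ factors the sum as $(f\times_A g)(m)\,(f\times_A g)(n)$, which is the claim.

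Finally, for item 3, existence and uniqueness of $\mu_A$ follow from $1(1)\neq 0$: the regular-convolution inverse of any $h$ with $h(1)\neq 0$ is defined by the usual recursion, and since the constant function $1$ is multiplicative, its inverse $\mu_A$ is multiplicative by item 2 (the inverse of a multiplicative function is multiplicative). Thus $\mu_A$ is determined on prime powers, where for $a\ge 1$ the relation $(1\times_A\mu_A)(p^a)=0$ becomes, after complementation, $\sum_{p^b\in A(p^a)}\mu_A(p^b)=0$. I would then argue by strong induction on $a$. The cases $a=0$ (value $1$) and $p^a$ primitive (forcing $\mu_A(p^a)=-1$) are direct. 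In the remaining case $t:=\tau_A(p^a)<a$, the relation expresses $\mu_A(p^a)$ through the values $\mu_A(p^c)$ with $c\in\{0,t,2t,\dots,a-t\}$; by property (3) each positive such $c$ is a multiple of $t$ of type $t$, so $p^c$ is $A$-primitive only when $c=t$. Hence by induction exactly two of these terms are nonzero, namely $\mu_A(p^0)=1$ and $\mu_A(p^t)=-1$, which cancel and force $\mu_A(p^a)=0$. This yields the stated trichotomy.
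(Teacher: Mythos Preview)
The paper does not actually prove this theorem; it simply records it as a quotation from Narkiewicz \cite[Section 3]{Narkiewicz1963}. Your argument is correct and amounts to a self-contained reconstruction of that classical proof: you reduce everything to prime powers via the multiplicativity axiom, use property~(3) of Definition~\ref{regular A function} to guarantee that the type $t$ is stable along the chain $\{1,p^t,\dots,p^k\}$ (which is exactly what makes the two triple families in the associativity step coincide), and then compute $\mu_A$ on prime powers by the obvious induction.

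One small point worth tightening: the sentence ``its inverse $\mu_A$ is multiplicative by item~2'' is not literally justified by item~2, which only asserts closure of multiplicative functions under $\times_A$, not under inversion. The standard fix is to \emph{define} a function $g$ multiplicatively from the prime-power recursion, observe that $1\times_A g$ is then multiplicative by item~2 and equals $\iota$ on every prime power (since $A(p^k)$ contains only powers of $p$), hence equals $\iota$ identically; uniqueness of inverses (from associativity, already established) then forces $g=\mu_A$. This is routine, but as written your parenthetical appeal to item~2 alone leaves a small gap.
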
 
Clearly, for two arithmetic functions $f,g$ such that,
\[g(n)=(1\times_Af)(n),\]
the m\"{o}bius inversion for the regular convolution is given by \begin{equation} \label{mobius inversion}
    f(n)=(\mu_A\times_Ag)(n).
\end{equation}

\begin{table}[]
    \centering
    \begin{tabular}{|c|c|c|c|c|}
    \hline
         $A_1$ & $A_2$ &$\textnormal{gcd}_{(A_1,A_2)}(m,n)$  &$C_{(A_1,A_2)}(m,n)$ & $\Phi_{(A_1,A_2)}(m,n)$\\
         \hline
         \hline
         $D$& $D$& $(m,n)$&$c_n(m)$\cite{Ramanujan_expansion} & $\frac{\varphi(n)\mu(n/(m,n))}{\varphi(n/(m,n))}$\cite{Hölder_1936}
         \\[1.5 ex]
         \hline
         $D$ & $U$ & $(m,n)_*$& $c^*(m,n)$\cite{Cohen1960_UnitaryRamSum} & $\frac{\varphi^*(n)\mu^*(n/(m,n)_*)}{\varphi^*(n/(m,n)_*)}$ \cite{DSuryaNarayana_VonSterneck} \\[1.5 ex]
         \hline
         $D$ & $A$ & $(m,n)_A$ & $c_A(m,n)$\cite{McCarthy1968} & $\frac{\varphi_A(n)\mu_A(n/(m,n)_A)}{\varphi_A(n/(m,n)_A)}$\cite{McCarthy_regular_arithmetical_convolution} \\[1.5 ex]
         \hline
         $U$ & $U$ & $(m,n)_{**}$ & $\Tilde{c}(m,n)$\cite{Toth_Modified_Ramsums} & $\frac{\varphi^*(n)\mu^*(n/(m,n)_{**})}{\varphi^*(n/(m,n)_{**})}$ \\
         [1.5 ex]
         \hline
    \end{tabular}
        \captionsetup{justification=centering}
        \caption{Some Ramanujan sums in literature that fit within our framework.}
    \label{tab:my_label}
\end{table}

 \begin{defn}[Generalization of the Ramanujan sum]\label{definition of generalization of RSum} Let $A_1,A_2$ be two regular $A$-functions. The generalized Ramanujan sum is defined by 
\begin{equation*}
    C_{(A_1,A_2)}(m,n)=\sum_{d\in A_1(m)\cap A_2(n)}d\mu_{A_2}\left(\frac{n}{d}\right),
\end{equation*}    
\end{defn}This definition can be compared to the one given in \cite[Theorem 1]{semi-regularconvolution}, and it can be easily seen that when $A_1 = D,$ these sums are the ones given by McCarthy \cite{McCarthy_regular_arithmetical_convolution} \cite{McCarthy1968}. The table above, Table \ref{tab:my_label}, highlights the broad applicability of our generalization. In fact, several of our results address and fill gaps present in the existing specific cases. We now state some properties of our generalization.

\begin{thm}[Multiplicativity]\label{multiplicativity}
The following identities hold:\begin{enumerate}
    \item \label{point 1 mult} $C_{(A_1,A_2)}(m,n_1)C_{(A_1,A_2)}(m,n_2)=C_{(A_1,A_2)}(m,n_1n_2)$ for $\gcd(n_1,n_2)=1$.
    \item \label{point 2 mult}$\mu_{A_2}(n)C_{(A_1,A_2)}(m_1m_2,n)=C_{(A_1,A_2)}(m_1,n)C_{(A_1,A_2)}(m_2,n)$ for $\gcd(m_1,m_2)=1$ and $\mu_{A_2}(n)\neq 0$.
\end{enumerate} 
\end{thm}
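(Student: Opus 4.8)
The plan is to reduce both identities to the direct-product structure of regular $A$-functions together with the explicit description of $\mu_A$ on prime powers in Theorem \ref{regular convolution remark}. The basic observation I will use repeatedly is that for coprime $a,b$ the product $A(ab)=A(a)A(b)$ is \emph{direct}: every element of $A(ab)$ factors uniquely as $d_1d_2$ with $d_1\in A(a)$, $d_2\in A(b)$, and then automatically $\gcd(d_1,d_2)=1$; conversely any such product lies in $A(ab)$. Both assertions are checked prime power by prime power, using properties 1 and 3 of Definition \ref{regular A function}, and I would record this once as a small lemma on prime-power decompositions and reuse it.

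For part \ref{point 1 mult}, fix $m$ and coprime $n_1,n_2$. First I would establish
\[
A_1(m)\cap A_2(n_1n_2)=\bigl\{\,d_1d_2 : d_1\in A_1(m)\cap A_2(n_1),\ d_2\in A_1(m)\cap A_2(n_2)\,\bigr\},
\]
with the factorization unique. The inclusion ``$\supseteq$'' is the direct-product remark applied to $A_2$. For ``$\subseteq$'', given $d\in A_1(m)\cap A_2(n_1n_2)$, write $d=d_1d_2$ with $d_i\mid n_i$ (possible as $d\mid n_1n_2$ and $\gcd(n_1,n_2)=1$); since $d\mid m$ we get $d_i\mid m$, and because $d_1,d_2$ have disjoint prime supports the condition $d_1d_2\in A_1(m)$ splits (prime-power-wise) into $d_1\in A_1(m)$ and $d_2\in A_1(m)$, and likewise $d\in A_2(n_1n_2)$ yields $d_i\in A_2(n_i)$. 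Substituting this reparametrisation into the defining sum of Definition \ref{definition of generalization of RSum}, writing $n_1n_2/(d_1d_2)=(n_1/d_1)(n_2/d_2)$ with $\gcd(n_1/d_1,n_2/d_2)=1$, and invoking the multiplicativity of $\mu_{A_2}$ (a consequence of $1\times_{A_2}\mu_{A_2}=\iota$ and Theorem \ref{regular convolution remark}), the double sum factors as $C_{(A_1,A_2)}(m,n_1)C_{(A_1,A_2)}(m,n_2)$.

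For part \ref{point 2 mult}, fix $n$ with $\mu_{A_2}(n)\neq 0$ and coprime $m_1,m_2$. The same bookkeeping, now decomposing $A_1(m_1m_2)=A_1(m_1)A_1(m_2)$ and intersecting with $A_2(n)$, gives
\[
A_1(m_1m_2)\cap A_2(n)=\bigl\{\,f_1f_2 : f_1\in A_1(m_1)\cap A_2(n),\ f_2\in A_1(m_2)\cap A_2(n)\,\bigr\},
\]
again with unique factorization and $\gcd(f_1,f_2)=1$. The step where the hypothesis $\mu_{A_2}(n)\neq0$ is essential is the identity
\[
\mu_{A_2}(n)\,\mu_{A_2}\!\left(\tfrac{n}{f_1f_2}\right)=\mu_{A_2}\!\left(\tfrac{n}{f_1}\right)\mu_{A_2}\!\left(\tfrac{n}{f_2}\right).
\]
Here I would use that $\mu_{A_2}(n)\neq0$ forces every prime power $p^{k}\parallel n$ to be $A_2$-primitive (Theorem \ref{regular convolution remark}), so that $A_2(n)$ consists exactly of the unitary-type divisors $\prod_{p\in S}p^{k_p}$; for such a divisor one has $\mu_{A_2}(n/\cdot)=(-1)^{|S|}\mu_{A_2}(n)\in\{1,-1\}$. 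Writing $f_1=\prod_{p\in S_1}p^{k_p}$ and $f_2=\prod_{p\in S_2}p^{k_p}$ with $S_1\cap S_2=\emptyset$ and using $\mu_{A_2}(n)^2=1$, both sides of the identity equal $(-1)^{|S_1|+|S_2|}$. Multiplying the defining sum for $C_{(A_1,A_2)}(m_1m_2,n)$ by $\mu_{A_2}(n)$, applying this identity termwise, and separating the $f_1$- and $f_2$-sums then produces $C_{(A_1,A_2)}(m_1,n)C_{(A_1,A_2)}(m_2,n)$.

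The routine part is the index-set bookkeeping — unique factorization in the direct products $A_i(\cdot\,)$ — which is why I would isolate it as a lemma. The genuine content, and the reason part \ref{point 2 mult} carries the extra factor $\mu_{A_2}(n)$ together with the hypothesis $\mu_{A_2}(n)\neq0$, is the M\"obius identity displayed above: without the $A_2$-squarefreeness of $n$ the values $\mu_{A_2}(n/f_i)$ fail to combine multiplicatively in the way required, so I expect that to be the main obstacle to handle carefully.
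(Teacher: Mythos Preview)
Your proof is correct and essentially matches the paper's: part~\ref{point 1 mult} is argued identically, and for part~\ref{point 2 mult} the paper reduces first to $n=p^k$ an $A_2$-primitive prime power (where $\gcd(m_1,m_2)=1$ forces one of the two divisors in the product sum to equal $1$) and then extends by multiplicativity in $n$, which is precisely your global M\"obius identity $\mu_{A_2}(n)\mu_{A_2}(n/(f_1f_2))=\mu_{A_2}(n/f_1)\mu_{A_2}(n/f_2)$ unpacked prime by prime. The difference is purely organizational.
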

\begin{proof}[Proof.]
To prove (\ref{point 1 mult}), first note that, using the multiplicativity property of regular $A$-functions,  $\{d_1d_2: d_1\in A_1(m)\cap A_2(n_1), d_2\in A_1(m)\cap A_2(n_2) \}$ is same as $\{d_1d_2: d_1d_2\in A_1(m)\cap A_2(n_1n_2)\}$. Therefore,  
    \begin{align*}
        C_{(A_1,A_2)}(m,n_1)C_{(A_1,A_2)}(m,n_2)=&\sum_{d_1\in A_1(m)\cap A_2(n_1)}d_1\mu_{A_2}\left(\frac{n_1}{d_1}\right)\sum_{d_2\in A_1(m)\cap A_2(n_2)}d_2\mu_{A_2}\left(\frac{n_2}{d_2}\right)\\
        =&\sum_{d_1d_2 \in A_1(m)\cap A_2(n_1n_2)}(d_1d_2)\mu_{A_2}\left(\frac{n_1n_2}{d_1d_2}\right)
    \end{align*}
    Now, we prove (\ref{point 2 mult}). Let $n=p^k$ for some prime $p$ such that $\mu_{A_2}(p^k)\neq 0$ i.e., $p^k$ is $A_2$-primitive.
    Then, \begin{align*}
        C_{(A_1,A_2)}(m_1,p^k)C_{(A_1,A_2)}(m_2,p^k)= &\sum_{d_1\in A_1(m_1)\cap A_2(p^k)}d_1\mu_{A_2}(p^k/d_1)\sum_{d_2\in A_1(m_2)\cap A_2(p^k)}d_2\mu_{A_2}(p^k/d_2).
    \end{align*}
    Now, as $(m_1,m_2)=1$ so either $d_1=1$ or $d_2=1$. This gives us,
\begin{eqnarray}
    \nonumber C_{(A_1,A_2)}(m_1,p^k)C_{(A_1,A_2)}(m_2,p^k)&=&\sum_{d\in A_1(m_1m_2)\cap A_2(p^k)}d\mu_{A_2}(p^k/d)\mu_{A_2}(p^k) \\
    \nonumber &=&\ \mu_{A_2}(p^k)C_{(A_1,A_2)}(m_1m_2,p^k).
\end{eqnarray}
The result follows from multiplcativity of $\mu_{A_2}(n)C_{(A_1,A_2)}(m,n)$ in $n$ for fixed $m$.  

\end{proof}

\begin{cor}[Evaluations at prime powers]\label{prime_eval}
    \begin{equation*}
        C_{(A_1,A_2)}(m,p^k)=\delta_{p^k}p^k-\delta_{p^{k-r}}p^{k-r},
    \end{equation*}
    where $r=\tau_{A_2}(p^k)$ and $\delta_x=\begin{cases}
        1, &x\in A_1(m)\\
        0, &x \notin A_1(m)
    \end{cases}$.
\end{cor}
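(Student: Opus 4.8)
The plan is to unwind Definition~\ref{definition of generalization of RSum} directly, using the explicit shape of $A_2$ on prime powers together with the prime-power values of $\mu_{A_2}$ from Theorem~\ref{regular convolution remark}(3). Set $r=\tau_{A_2}(p^k)$; by parts (2)--(3) of Definition~\ref{regular A function} this forces $r\mid k$ and $A_2(p^k)=\{1,p^r,p^{2r},\dots,p^k\}$, with $p^r$ its (unique) primitive element. Consequently every $d$ in the summation range is of the form $d=p^{jr}$ with $0\le j\le k/r$ that additionally lies in $A_1(m)$, and for such $d$ we have $n/d=p^{\,k-jr}$, so
\begin{equation*}
C_{(A_1,A_2)}(m,p^k)=\sum_{\substack{0\le j\le k/r\\ p^{jr}\in A_1(m)}} p^{jr}\,\mu_{A_2}\!\left(p^{\,k-jr}\right).
\end{equation*}

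Next I would decide which terms survive. By Theorem~\ref{regular convolution remark}(3), $\mu_{A_2}(p^{\,k-jr})$ is nonzero only if $k-jr=0$ or $p^{\,k-jr}$ is $A_2$-primitive. The first case is $j=k/r$, where $d=p^k$, $\mu_{A_2}(1)=1$, and the contribution is $p^k$ exactly when $p^k\in A_1(m)$, i.e.\ $\delta_{p^k}=1$. For the second case, I claim $p^{\,k-jr}$ is $A_2$-primitive iff $k-jr=r$: since $k-jr$ is a nonnegative multiple of $r$ not exceeding $k$, the element $p^{\,k-jr}$ lies in $A_2(p^k)$, so (when $k-jr\ge 1$) Definition~\ref{regular A function}(3) gives $A_2(p^{\,k-jr})=\{1,p^r,\dots,p^{\,k-jr}\}$, which equals $\{1,p^{\,k-jr}\}$ precisely when $k-jr=r$. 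That case is $j=k/r-1$, with $d=p^{k-r}$, $\mu_{A_2}(p^r)=-1$, contributing $-p^{k-r}$ whenever $p^{k-r}\in A_1(m)$, i.e.\ $\delta_{p^{k-r}}=1$. Every other term vanishes, which is exactly the asserted identity $C_{(A_1,A_2)}(m,p^k)=\delta_{p^k}p^k-\delta_{p^{k-r}}p^{k-r}$.

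The step that needs genuine care is this identification of the surviving terms: one must rule out $p^{\ell r}$ being $A_2$-primitive for $\ell\ge 2$, and this is precisely where axiom (3) of a regular $A$-function is essential — it pins the ``type'' of $A_2$ on $p^b$ to the same value $r$ for all positive multiples $b$ of $r$ with $b\le k$, so that $p^{\ell r}$ with $\ell\ge2$ properly contains $p^r$ and hence fails to be $A_2$-primitive. Beyond that, only the boundary situations remain, and they are immediate: when $k-r=0$ one has $\delta_{p^{k-r}}=\delta_1=1$ automatically since $1\in A_1(m)$, and the case $k=0$ (if admitted) makes the sum trivially equal to $1$; I would dispose of these in a line.
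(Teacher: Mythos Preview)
Your proof is correct and follows essentially the same approach as the paper: both unwind the defining sum and observe that $\mu_{A_2}(p^k/d)$ vanishes except at $d=p^k$ and $d=p^{k-r}$. You simply supply more detail (the explicit parametrization $d=p^{jr}$ and the justification via Definition~\ref{regular A function}(3) that no $p^{\ell r}$ with $\ell\ge 2$ is $A_2$-primitive) than the paper's two-line argument, but the route is identical.
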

\begin{proof}[Proof.]
 From the definition of $C_{A_{1},A_{2}}(m,n)$ we have 
 \begin{equation*}
     C_{(A_1,A_2)}(m,p^k)=\sum_{d\in A_1(m)\cap A_2(p^k)}d\mu_{A_2}\left(\frac{p^k}{d}\right).
 \end{equation*}
 Since $ \mu_{A_2} \left( \frac{p^k}{d} \right) $ is non-zero only for $ d = p^k $ or $ d = p^{k-r} $, the result follows.
\end{proof}

\begin{thm}[Sum over regular divisors]\label{res:sum_reg_div}
    \begin{equation*}
        \sum_{d\in A_2(n)}C_{(A_1,A_2)}(m,d)=\begin{cases}
            n, &n\in A_1(m)\\
            0, &n \notin A_1(m).
        \end{cases}
    \end{equation*}
\end{thm}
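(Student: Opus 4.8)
The plan is to recognize the summand as a value of a regular $A_2$-convolution and then exploit associativity. For fixed $m$, introduce the arithmetic function $f_m$ with $f_m(d)=d$ if $d\in A_1(m)$ and $f_m(d)=0$ otherwise. The constraint $e\in A_1(m)\cap A_2(d)$ in Definition \ref{definition of generalization of RSum} just says ``$e$ runs over $A_2(d)$, with weight given by the indicator of the fixed set $A_1(m)$'', so
\[
C_{(A_1,A_2)}(m,d)\;=\;\sum_{e\in A_2(d)}f_m(e)\,\mu_{A_2}\!\left(\tfrac{d}{e}\right)\;=\;(f_m\times_{A_2}\mu_{A_2})(d).
\]
In particular $d\mapsto C_{(A_1,A_2)}(m,d)$ is a genuine arithmetic function to which the ring structure of Theorem \ref{regular convolution remark} applies.

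Next, by definition of the regular convolution (\ref{regular convolution}) and the fact that $d\mapsto n/d$ is an involution on $A_2(n)$ (immediate from Definition \ref{regular A function}), the left-hand side equals $(1\times_{A_2}C_{(A_1,A_2)}(m,\cdot))(n)$. Substituting the identity above and using associativity and commutativity of $\times_{A_2}$ together with the defining relation $1\times_{A_2}\mu_{A_2}=\iota$ from Theorem \ref{regular convolution remark},
\[
\sum_{d\in A_2(n)}C_{(A_1,A_2)}(m,d)\;=\;\bigl((1\times_{A_2}\mu_{A_2})\times_{A_2}f_m\bigr)(n)\;=\;(\iota\times_{A_2}f_m)(n)\;=\;f_m(n),
\]
where the last equality holds because $1\in A_2(n)$ for every $n$. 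Since $f_m(n)=n$ when $n\in A_1(m)$ and $f_m(n)=0$ otherwise, this is exactly the asserted formula.

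The argument is short, so there is no real obstacle; the one place that warrants care is the first step --- confirming that the intersection $A_1(m)\cap A_2(d)$ may legitimately be absorbed into the weight $f_m$ (it may, since $A_1(m)$ does not depend on the summation variable $d$), and hence that $C_{(A_1,A_2)}(m,\cdot)$ is an $A_2$-convolution of $\mu_{A_2}$ with something. If one prefers to avoid naming $f_m$, the same conclusion follows by expanding $\sum_{d\in A_2(n)}\sum_{e\in A_1(m)\cap A_2(d)}e\,\mu_{A_2}(d/e)$, interchanging the order of summation via the nested-divisor property of regular $A$-functions (namely $e\in A_2(d)$ together with $d\in A_2(n)$ is equivalent to $e\in A_2(n)$ together with $d/e\in A_2(n/e)$, which is associativity of $\times_{A_2}$ in disguise), writing $d=ef$ with $f\in A_2(n/e)$, and then using $\sum_{f\in A_2(n/e)}\mu_{A_2}(f)=\iota(n/e)$, which is nonzero only for $e=n$.
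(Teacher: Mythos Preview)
Your proof is correct and is essentially identical to the paper's: your auxiliary function $f_m$ is exactly the paper's $I_m^{(A_1)}$, and your associativity computation $(1\times_{A_2}\mu_{A_2})\times_{A_2}f_m=\iota\times_{A_2}f_m=f_m$ is nothing other than the M\"{o}bius inversion (Equation~\ref{mobius inversion}) that the paper invokes in one line. The only difference is that you spell out the inversion step explicitly and append an alternative double-sum derivation, but the underlying idea is the same.
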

\begin{proof}[Proof.]
    Let the function $I_m^{(A_1)}:\mathbb{N}\rightarrow \mathbb{N}$ be defined as:\begin{equation*}
        I_m^{(A_1)}(x)=\begin{cases}
            x,&\text{ if }x\in A_1(m)\\
            0,&\text{ if }x\notin A_1(m)
        \end{cases}
    \end{equation*}
    Expressing the generalized Ramanujan sum in terms of $I_m^{(A_1)}(x)$ we have 
    \begin{align*}
        C_{(A_1,A_2)}(m,n)=&\sum_{d\in A_2(n)}I_m^{(A_1)}(d)\mu_{A_2}\left(\frac{n}{d}\right)\\
        =&(I_m^{(A_1)}\times_{A_2}\mu_{A_2})(n)
    \end{align*}
    The result follows by m\"{o}bius inversion given in Equation \ref{mobius inversion}.  
\end{proof}

\subsection{A Generalization of the GCD function}\label{gcd gen}

We extend GCD function with two regular $A$-functions.

\begin{defn}[A generalized GCD function]
    \begin{equation*}
        \text{gcd}_{(A_1,A_2)}(a,b)=\max\{d: d\in A_1(a)\cap A_2(b)\}.
    \end{equation*}
\end{defn}
When $A_1=D$ and $A_{2}$ is left arbitrary, this correspondence to the generalized gcd given in \cite{McCarthy1968,McCarthy_regular_arithmetical_convolution,semi-regularconvolution}. In the following, we discuss some key properties of gcd.
\begin{thm}[GCD function is multiplicative]
For $\gcd(n_1,n_2)=1$,
    \begin{equation*}
        \textnormal{gcd}_{(A_1,A_2)}(m,n_1)\textnormal{gcd}_{(A_1,A_2)}(m,n_2)=\textnormal{gcd}_{(A_1,A_2)}(m,n_1n_2).
    \end{equation*}
\end{thm}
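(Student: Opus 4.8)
The plan is to push the whole computation onto the multiplicativity of the set-valued functions $A_1$ and $A_2$ (property 1 of Definition~\ref{regular A function}) together with the prime-power description in that same definition, and then to invoke the elementary fact that the maximum of a set of products of positive integers factors as the product of the maxima.

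First I would prove the set identity
$$
A_1(m)\cap A_2(n_1n_2)=\bigl\{\,d_1d_2 \;:\; d_1\in A_1(m)\cap A_2(n_1),\ d_2\in A_1(m)\cap A_2(n_2)\,\bigr\}
$$
for $\gcd(n_1,n_2)=1$. Since $A_2$ is multiplicative, $A_2(n_1n_2)=A_2(n_1)A_2(n_2)$, and because $n_1$ and $n_2$ are coprime every $d\in A_2(n_1n_2)$ factors uniquely as $d=d_1d_2$ with $d_1\in A_2(n_1)$, $d_2\in A_2(n_2)$ and $\gcd(d_1,d_2)=1$. So the only thing left to check is the claim that, for coprime $d_1,d_2$,
$$
d_1d_2\in A_1(m)\quad\Longleftrightarrow\quad d_1\in A_1(m)\ \text{and}\ d_2\in A_1(m).
$$

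To verify this I would write $m=\prod_p p^{m_p}$ and use multiplicativity of $A_1$ in the form: an integer $e$ lies in $A_1(m)$ precisely when $p^{v_p(e)}\in A_1(p^{m_p})$ for every prime $p$, where $v_p$ is the $p$-adic valuation (this uses that each $A_1(p^{m_p})$ consists of powers of $p$ and contains $1$, and that for $p\nmid m$ we have $A_1(p^0)=\{1\}$). When $\gcd(d_1,d_2)=1$, for each prime $p$ the exponent $v_p(d_1d_2)$ equals $v_p(d_1)$ when $p\mid d_1$, equals $v_p(d_2)$ when $p\mid d_2$, and equals $0$ otherwise --- and $1\in A_1(p^{m_p})$ always. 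Splitting the condition ``$p^{v_p(d_1d_2)}\in A_1(p^{m_p})$ for all $p$'' according to whether $p$ divides $d_1$, divides $d_2$, or neither yields exactly ``$d_1\in A_1(m)$ and $d_2\in A_1(m)$''. This also gives both inclusions of the displayed set identity: for $\supseteq$, if $d_i\in A_1(m)\cap A_2(n_i)$ then $d_1d_2\in A_2(n_1n_2)$ by multiplicativity and $d_1d_2\in A_1(m)$ by the claim (note $\gcd(d_1,d_2)\mid\gcd(n_1,n_2)=1$); for $\subseteq$, factor $d=d_1d_2$ as above and apply the claim in the reverse direction.

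Finally the theorem follows by taking maxima: for finite sets $S,T\subseteq\mathbb{N}$ one has $\max\{st:s\in S,\ t\in T\}=(\max S)(\max T)$, since $st\le(\max S)(\max T)$ for all admissible $s,t$ and the bound is attained. Applying this with $S=A_1(m)\cap A_2(n_1)$ and $T=A_1(m)\cap A_2(n_2)$ and using the set identity gives
$$
\textnormal{gcd}_{(A_1,A_2)}(m,n_1n_2)=\max\bigl(A_1(m)\cap A_2(n_1n_2)\bigr)=\textnormal{gcd}_{(A_1,A_2)}(m,n_1)\,\textnormal{gcd}_{(A_1,A_2)}(m,n_2).
$$
The argument is essentially formal; the one step that needs genuine attention is the coprime-splitting claim for $A_1$-membership, because it is membership in $A_1(m)$ --- not mere divisibility of $m$ --- that must be shown to factor, and this is precisely where the prime-power structure of Definition~\ref{regular A function} enters. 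Beyond being careful there, I anticipate no real obstacle.
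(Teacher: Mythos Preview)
Your proof is correct and follows essentially the same approach as the paper's: establish the set identity $A_1(m)\cap A_2(n_1n_2)=\bigl(A_1(m)\cap A_2(n_1)\bigr)\bigl(A_1(m)\cap A_2(n_2)\bigr)$ and then apply $\max(ST)=(\max S)(\max T)$. The paper simply asserts the set identity without justification, whereas you carefully verify the coprime-splitting claim for $A_1$-membership via the prime-power structure --- so your argument is a strict expansion of the paper's.
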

\begin{proof}[Proof.]
    For two finite subsets $S, T$ of natural numbers, denote $ST=\{xy: x\in S, y\in T\}$. It is known that $\max(ST)=\max(S)\max(T)$. Let $B_{n_1}=\{d:d\in A_1(m)\cap A_2(n_1)\}$ and $B_{n_2}=\{d:d\in A_1(m)\cap A_2(n_2)\}$. So, $\max(B_{n_1}B_{n_2})=\max(B_{n_1})\max(B_{n_2})$. Since $(n_1,n_2)=1$, $B_{n_1}B_{n_2}=\{dd': d\in A_1(m)\cap A_2(n_1), d'\in A_1(m)\cap A_2(n_2) \}=\{dd': d\in A_1(m)\cap A_2(n_1n_2)\}$. 
\end{proof}
 
\begin{thm} Let $g=\textnormal{gcd}_{(A_1,A_2)}(m,n)$. Then, 
    \begin{equation*}
        d\in A_1(m)\cap A_2(n)\text{ if and only if } d\in A_1(g)\cap A_2(g).
    \end{equation*}
\end{thm}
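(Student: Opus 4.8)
The plan is to reduce everything to prime powers via multiplicativity and then carry out a short exponent bookkeeping. First I would record the decomposition
$A_1(m)\cap A_2(n)=\prod_p\bigl(A_1(p^{v_p(m)})\cap A_2(p^{v_p(n)})\bigr)$,
where the product runs over primes, $v_p$ denotes the $p$-adic valuation, and a product of sets means the set of products of one element from each factor; this is immediate from the multiplicativity of $A_1,A_2$ (Definition \ref{regular A function}(1)) together with the fact that membership $d\in A(n)$ is detected prime-by-prime. By Definition \ref{regular A function}(2), with $a=v_p(m)$ and $t_1=\tau_{A_1}(p^{a})$, the component $A_1(p^{a})$ is exactly the set of $p$-powers whose exponent is a multiple of $t_1$ and is at most $a$, and similarly $A_2(p^{b})$ with $b=v_p(n)$ and $t_2=\tau_{A_2}(p^{b})$. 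Hence their intersection consists of the $p$-powers $p^{c}$ with $\lcm(t_1,t_2)\mid c$ and $c\le\min(a,b)$, so its maximal element has exponent $v_p(g)=\bigl\lfloor\min(a,b)/\lcm(t_1,t_2)\bigr\rfloor\cdot\lcm(t_1,t_2)$, since $g=\max\bigl(A_1(m)\cap A_2(n)\bigr)$ by definition.

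For the inclusion $A_1(g)\cap A_2(g)\subseteq A_1(m)\cap A_2(n)$ I would first isolate the elementary sublemma that $e\in A(n)$ implies $A(e)\subseteq A(n)$ for any regular $A$-function: prime-by-prime, $p^{v_p(e)}\in A(p^{v_p(n)})$ forces, via Definition \ref{regular A function}(3), that $A(p^{v_p(e)})$ is the initial segment $\{1,p^{t},\dots,p^{v_p(e)}\}$ of $A(p^{v_p(n)})=\{1,p^{t},\dots,p^{v_p(n)}\}$ with the same type $t$. Applying this with $e=g$, and using that $g\in A_1(m)\cap A_2(n)$ (it is the maximum of a set containing it), gives $A_1(g)\subseteq A_1(m)$ and $A_2(g)\subseteq A_2(n)$, hence the desired inclusion.

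For the reverse inclusion, take $d\in A_1(m)\cap A_2(n)$. From the exponent description above, for each prime $p$ the exponent $v_p(d)$ is a multiple of $\lcm(t_1,t_2)$ not exceeding $\min(v_p(m),v_p(n))$, hence not exceeding $v_p(g)$; therefore $d\mid g$. It then remains to promote this divisibility to $d\in A_1(g)$ and $d\in A_2(g)$. Fixing $p$ and writing $t_1$ for the type of $A_1$ at $p^{v_p(m)}$: both $p^{v_p(d)}$ and $p^{v_p(g)}$ lie in $A_1(p^{v_p(m)})$, so $t_1\mid v_p(d)$, and by Definition \ref{regular A function}(3) one has $A_1(p^{v_p(g)})=\{1,p^{t_1},\dots,p^{v_p(g)}\}$; since $t_1\mid v_p(d)\le v_p(g)$ this gives $p^{v_p(d)}\in A_1(p^{v_p(g)})$. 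Taking the product over $p$ yields $d\in A_1(g)$, and the same argument with $A_2$ gives $d\in A_2(g)$.

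I expect the only real obstacle to be the bookkeeping around the type functions $\tau_{A_1},\tau_{A_2}$: one must check that the truncation $p^{v_p(g)}$ genuinely lies in both $A_1(p^{v_p(m)})$ and $A_2(p^{v_p(n)})$ (which is precisely the $\lcm$-divisibility established above), so that Definition \ref{regular A function}(3) applies and the types at $p^{v_p(g)}$ coincide with those at $p^{v_p(m)}$ and $p^{v_p(n)}$. Once this is in place, both inclusions are one line each, the trivial edge cases $v_p(g)=0$ being handled by $\{1\}\subseteq A(n)$, and everything else is routine multiplicativity.
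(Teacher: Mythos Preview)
Your proposal is correct and follows essentially the same approach as the paper: reduce to prime powers via multiplicativity, then handle each direction separately, using in particular the sublemma that $e\in A(n)$ implies $A(e)\subseteq A(n)$ (Definition~\ref{regular A function}(3)) for the inclusion $A_1(g)\cap A_2(g)\subseteq A_1(m)\cap A_2(n)$. The paper's own proof is much terser---it dispatches the forward direction with ``it can be easily shown''---whereas you spell out the exponent arithmetic in full via the $\lcm(t_1,t_2)$ description of $A_1(p^a)\cap A_2(p^b)$; this extra detail is correct and fills in exactly what the paper leaves to the reader.
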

\begin{proof}[Proof.]
First, note that $ d \in A_1(m) \cap A_2(n) $ if and only if $ p^k \in A_1(m) \cap A_2(n) $ for each $ p^k \parallel d $. If $ p^k \in A_1(m) \cap A_2(n) $, then it can be easily shown that $ p^k \in A_1(g) \cap A_2(g) $. Conversely, if $ p^k \in A_1(g) $, then $ p^k \in A_1(m) $, and similarly, $ p^k \in A_2(n) $.
\end{proof}
\begin{thm}\label{gcd prop 3}
    $
        \textnormal{gcd}_{(A_1,A_2)}(m,n)=d \text{ implies } \textnormal{gcd}_{(A_1,A_2)}(m/d,n/d)=1.
    $
\end{thm}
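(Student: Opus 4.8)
The plan is to argue by contradiction, exploiting the maximality built into the definition of $\textnormal{gcd}_{(A_1,A_2)}$. First I would record the basic observation that, since $A_1(m)\subseteq D(m)$ and $A_2(n)\subseteq D(n)$, the hypothesis $\textnormal{gcd}_{(A_1,A_2)}(m,n)=d$ forces $d\mid m$ and $d\mid n$; hence $m/d$ and $n/d$ are genuine natural numbers, $1\in A_1(m/d)\cap A_2(n/d)$, and $\textnormal{gcd}_{(A_1,A_2)}(m/d,n/d)$ is a well-defined integer $\geq 1$. So it suffices to exclude the possibility $\textnormal{gcd}_{(A_1,A_2)}(m/d,n/d)=e$ with $e>1$.

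The crux is the following ``transitivity'' property of a single regular $A$-function $A$: if $d\in A(N)$ and $e\in A(N/d)$, then $de\in A(N)$. I would prove this by reducing to prime powers. By multiplicativity of $A$ (property (1) of Definition \ref{regular A function}), for any $x$ dividing $N$ one has $x\in A(N)$ if and only if $p^{v_p(x)}\in A(p^{v_p(N)})$ for every prime $p$; so it is enough to fix $p$, put $a=v_p(N)$, $i=v_p(d)$, $j=v_p(e)$, and show that $p^i\in A(p^a)$ together with $p^j\in A(p^{a-i})$ implies $p^{i+j}\in A(p^a)$. Writing $A(p^a)=\{1,p^t,\dots,p^a\}$ with $t\mid a$ (property (2)), membership $p^i\in A(p^a)$ gives $t\mid i$; combined with $t\mid a$ this yields $t\mid(a-i)$, so $p^{a-i}\in A(p^a)$ and property (3) gives $A(p^{a-i})=\{1,p^t,\dots,p^{a-i}\}$. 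Then $p^j\in A(p^{a-i})$ forces $t\mid j$ and $j\leq a-i$, whence $t\mid(i+j)$ and $i+j\leq a$, i.e.\ $p^{i+j}\in A(p^a)$. The boundary cases $i=0$ and $i=a$ are immediate once one notes $e\mid N/d$.

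With this property established, the conclusion follows quickly: if $e:=\textnormal{gcd}_{(A_1,A_2)}(m/d,n/d)>1$, then $e\in A_1(m/d)$ and $e\in A_2(n/d)$, so applying transitivity with $(A,N)=(A_1,m)$ and with $(A,N)=(A_2,n)$ gives $de\in A_1(m)$ and $de\in A_2(n)$; thus $de\in A_1(m)\cap A_2(n)$ with $de>d$, contradicting the fact that $d$ is the maximum of $A_1(m)\cap A_2(n)$. Hence $\textnormal{gcd}_{(A_1,A_2)}(m/d,n/d)=1$.

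I expect the only real work to be in the transitivity lemma, and specifically in the bookkeeping around the ``type'' exponent $t$: one must be sure that the $t$ attached to $A(p^a)$ is the very one governing $A(p^{a-i})$, which is exactly what property (3) of Definition \ref{regular A function} supplies, and one must dispose of the degenerate valuations cleanly. Everything else is formal.
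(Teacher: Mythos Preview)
Your proof is correct and proceeds along the same lines as the paper's: both reduce to prime powers and obtain a contradiction from the maximality defining $\textnormal{gcd}_{(A_1,A_2)}$. The only difference is organizational---you first isolate a reusable transitivity lemma for a single regular $A$-function (if $d\in A(N)$ and $e\in A(N/d)$ then $de\in A(N)$) and then apply it to $A_1$ and $A_2$ separately, whereas the paper reduces the full statement to one prime via the multiplicativity of $\textnormal{gcd}_{(A_1,A_2)}$ and argues with both functions simultaneously.
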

\begin{proof}[Proof.]
    It suffices to prove the prime power case using multiplicativity.  Consider 
    $A_1(p^{ks})=\{1,p^s,p^{2s},\dots, p^{ks}\}$ and $A_2(p^{rt})=\{1,p^r,p^{2r},\dots, p^{rt}\}$. Now there exists $\alpha,\beta$ such that $p^{\alpha s}=p^{\beta r}$ and no higher power of $p$ is in $A_1(p^{ks})\cap A_2(p^{rt})$. Now consider $S=A_1(p^{(k-\alpha)s})\cap A_2(p^{(t-\beta)r})$. If $S\neq \{1\}$ then there exist some $\gamma, \delta$ such that $p^{\gamma s}=p^{\delta r}\in S$. Then $p^{(\alpha+\gamma)s}\in A_1(p^{ks})$, $p^{(\beta+\delta)r}\in A_1(p^{rt})$ and $p^{(\alpha+\gamma)s}=p^{(\beta+\delta)r}$ and this contradicts the maximality of $p^{\alpha s}=p^{\beta r}$. 
    
\end{proof}
\begin{thm}
    Let $d\in A_1(m)\cap A_2(n)$. If $\textnormal{gcd}_{(A_1,A_2)}(m/d,n/d)=1$ then $\textnormal{gcd}_{(A_1,A_2)}(m,n)=d$.
\end{thm}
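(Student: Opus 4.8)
The plan is to reduce to the case of prime powers and then run a short $\lcm$ computation with the types of $A_1$ and $A_2$. First I would observe that the whole statement decouples prime by prime. Every element of $A_1(m)\cap A_2(n)$ divides $\gcd(m,n)$, and by multiplicativity of $A_1$ and $A_2$ the set $A_1(m)\cap A_2(n)$ is the setwise product, over the primes $p\mid\gcd(m,n)$, of the sets $A_1(p^{v_p(m)})\cap A_2(p^{v_p(n)})$; since the maximum of a product of finite sets of naturals is the product of the maxima, $\gcd_{(A_1,A_2)}(a,b)=\prod_p \gcd_{(A_1,A_2)}(p^{v_p(a)},p^{v_p(b)})$. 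Writing $d=\prod_p p^{j_p}$, the hypothesis $d\in A_1(m)\cap A_2(n)$ is equivalent to $p^{j_p}\in A_1(p^{v_p(m)})\cap A_2(p^{v_p(n)})$ for all $p$; the hypothesis $\gcd_{(A_1,A_2)}(m/d,n/d)=1$ is equivalent to $\gcd_{(A_1,A_2)}(p^{v_p(m)-j_p},p^{v_p(n)-j_p})=1$ for all $p$; and the conclusion is equivalent to $\gcd_{(A_1,A_2)}(p^{v_p(m)},p^{v_p(n)})=p^{j_p}$ for all $p$. Hence it suffices to treat $m=p^k$, $n=p^l$, $d=p^j$.

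For a single prime, write $A_1(p^k)=\{1,p^{s},p^{2s},\dots,p^{k}\}$ and $A_2(p^l)=\{1,p^{t},p^{2t},\dots,p^{l}\}$, where $s=\tau_{A_1}(p^k)\mid k$ and $t=\tau_{A_2}(p^l)\mid l$. From $p^j\in A_1(p^k)\cap A_2(p^l)$ one reads off $s\mid j$, $t\mid j$ and $j\le\min(k,l)$, so $L:=\lcm(s,t)$ divides $j$. The structural input I would use here is property (3) of Definition \ref{regular A function}: since $s\mid(k-j)$ we have $p^{k-j}\in A_1(p^k)$, hence $A_1(p^{k-j})=\{1,p^{s},\dots,p^{k-j}\}$ with the \emph{same} type $s$, and likewise $A_2(p^{l-j})=\{1,p^{t},\dots,p^{l-j}\}$ with type $t$; the degenerate cases $k=j$ or $l=j$ just give $\{1\}$ and cause no difficulty.

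With these descriptions, $\gcd_{(A_1,A_2)}(p^k,p^l)=p^{e}$ where $e$ is the largest multiple of $L$ with $e\le\min(k,l)$, while $\gcd_{(A_1,A_2)}(p^{k-j},p^{l-j})=p^{e'}$ where $e'$ is the largest multiple of $L$ with $e'\le\min(k,l)-j$. Since $L\mid j$ and $j\le\min(k,l)$, the exponent $j$ is itself such a multiple, so $e\ge j$. If $e>j$, then $e-j$ is a positive multiple of $L$ with $e-j\le\min(k,l)-j$, whence $e'\ge e-j\ge L\ge 1$, contradicting $\gcd_{(A_1,A_2)}(p^{k-j},p^{l-j})=1$. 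Therefore $e=j$, i.e.\ $\gcd_{(A_1,A_2)}(p^k,p^l)=p^j$, and reassembling over the primes gives $\gcd_{(A_1,A_2)}(m,n)=d$. I expect the only points needing care to be the decoupling in the first paragraph — checking that both hypotheses really split prime-by-prime — and the correct invocation of the regularity axiom so that passing from $p^k$ to $p^{k-j}$ preserves the type; everything after that is the elementary $\lcm$ bound just described.
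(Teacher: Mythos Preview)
Your proof is correct and follows essentially the same route as the paper: reduce to a single prime via multiplicativity, observe (using property~(3) of regular $A$-functions) that the types $s,t$ are unchanged when passing from $p^k,p^l$ to $p^{k-j},p^{l-j}$, and then argue by contradiction that a larger common exponent would survive the division by $p^j$. Your write-up is in fact cleaner than the paper's --- you make the role of $L=\lcm(s,t)$ explicit and you justify the prime-by-prime decoupling of both hypotheses, whereas the paper simply asserts the reduction --- but the underlying argument is the same.
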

\begin{proof}[Proof.]
   It suffices to prove the prime power case. Note that $\text{gcd}_{(A_1,A_2)}(p^{sk},p^{tr})=\max(A_1(p^{ks})\cap A_2(p^{tr}))$ where $A_1(p^{ks}), A_2(p^{rt})$ is defined as in the proof of Theorem \ref{gcd prop 3}. Let $p^l\in A_1(p^{ks})\cap A_2(p^{tr})$ and $p^l\neq \text{gcd}_{(A_1,A_2)}(p^{ks},p^{tr})$ where $l=as=bt$ for some $a,b$.
   Now, if $p^{\alpha s}=p^{\beta r}=\text{gcd}_{(A_1,A_2)}(p^{ks},p^{tr})$, then \begin{align*}
       A_1(p^{(k-a)s})&=\{1, p^s,\dots,p^{(c-a)s},\dots, p^{(k-a)s}\} \quad\textnormal{and} \\
       A_2(p^{(t-b)r})&=\{1,p^r,\dots,p^{(d-b)r},\dots, p^{(t-b)r}\}.
   \end{align*}
   Clearly, $p^{(\alpha-a)s}=p^{(\beta-b)t}$ so $A_1(p^{(k-a)s})\cap A_2(p^{(t-b)r})\neq\{1\}$.
\end{proof}
Note that $ \text{gcd}_{(A_1, A_2)}(m/d, n/d) = 1 $ does not necessarily imply $ \text{gcd}_{(A_1, A_2)}(m, n) = d $ unless $ d \in A_1(m) \cap A_2(n) $. For example, for any $ n \neq m $ and $ d \mid \gcd(m, n) $, $ U(m/d) \cap U(n/d) = 1 $, but $ \text{gcd}_{(U, U)}(m, n) \neq d $ unless $ d = 1 $.
\begin{thm}
    Let $m,n\in A_1(t_1)\cap A_2(t_2)$ for some $m,n,t_1,t_2$ then $\textnormal{gcd}_{(A_1,A_2)}(m,n)=\gcd(m,n)$ and $\gcd(m,n)\in A_1(m)\cap A_2(n)$.
\end{thm}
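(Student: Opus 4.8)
The plan is to factor the statement across primes and then invoke the ``initial-segment'' structure that is built into the definition of a regular $A$-function. First I would record the elementary consequence of the multiplicativity of $A_1$ and $A_2$: a positive integer $d$ lies in $A_1(m)\cap A_2(n)$ if and only if $p^{d_p}\in A_1(p^{m_p})\cap A_2(p^{n_p})$ for every prime $p$, where $p^{x_p}$ denotes the exact power of $p$ dividing $x$. Since the maximum of a finite product set is the product of the maxima, this yields
\[
\textnormal{gcd}_{(A_1,A_2)}(m,n)=\prod_{p}\max\{\,p^e:\ p^e\in A_1(p^{m_p})\cap A_2(p^{n_p})\,\},
\]
and the membership assertion $\gcd(m,n)\in A_1(m)\cap A_2(n)$ likewise splits prime by prime; meanwhile the hypothesis $m,n\in A_1(t_1)\cap A_2(t_2)$ localizes to $p^{m_p},p^{n_p}\in A_1(p^{t_{1,p}})$ and $p^{m_p},p^{n_p}\in A_2(p^{t_{2,p}})$ for all $p$. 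So it suffices to treat prime powers: assuming $p^a,p^b\in A_1(p^u)$ and $p^a,p^b\in A_2(p^v)$, I must show $p^{\min(a,b)}\in A_1(p^a)\cap A_2(p^b)$ and that this is $\max\{p^e: p^e\in A_1(p^a)\cap A_2(p^b)\}$.

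The only substantive step is the following fact about a single regular $A$-function $A$: \emph{if $p^x,p^y\in A(p^c)$ and $x\le y$, then $p^x\in A(p^y)$.} Indeed, by property (2) of Definition \ref{regular A function} we may write $A(p^c)=\{1,p^\tau,p^{2\tau},\dots,p^c\}$, so $\tau\mid x$ and $\tau\mid y$; and by property (3) (for $y\ge1$; the case $y=0$ is trivial), since $p^y\in A(p^c)$ we get $A(p^y)=\{1,p^\tau,p^{2\tau},\dots,p^y\}$, which contains $p^x$ because $\tau\mid x$ and $x\le y$. Now, in the prime-power case, take without loss of generality $a\le b$. Applying the fact with $A=A_2$, $c=v$ gives $p^a\in A_2(p^b)$, and $p^a$ is trivially the top element of $A_1(p^a)$, so $p^{\min(a,b)}=p^a\in A_1(p^a)\cap A_2(p^b)$. (If $b\le a$, apply the fact with $A=A_1$, $c=u$ to get $p^b\in A_1(p^a)$, with $p^b\in A_2(p^b)$ trivially; the cases where $p$ divides at most one of $m,n$ are covered automatically, one of the local sets then being $\{1\}$.)

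Finally, every $p^e\in A_1(p^a)\cap A_2(p^b)$ has $e\le a$ and $e\le b$ since $A_i(p^c)\subseteq D(p^c)$, hence $e\le\min(a,b)$; combined with the previous paragraph this shows $p^{\min(a,b)}$ is the maximum of that intersection, i.e.\ $\textnormal{gcd}_{(A_1,A_2)}(p^a,p^b)=p^{\min(a,b)}=\gcd(p^a,p^b)$. Taking the product over all primes then gives both $\textnormal{gcd}_{(A_1,A_2)}(m,n)=\gcd(m,n)$ and $\gcd(m,n)\in A_1(m)\cap A_2(n)$. I do not expect any real obstacle: the argument is essentially bookkeeping, and the single non-formal ingredient is property (3) of a regular $A$-function, which is exactly what forces $A_2(p^b)$ to be an initial segment of $A_2(p^v)$ and hence to contain $p^a$ — without the hypothesis that both $m$ and $n$ sit in a common $A_2(t_2)$ this can fail, as the remark preceding the theorem illustrates.
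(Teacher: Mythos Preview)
Your proof is correct and follows essentially the same route as the paper: reduce to prime powers via multiplicativity, then use properties (2) and (3) of Definition~\ref{regular A function} to see that if $p^a,p^b$ lie in a common $A(p^c)$ then the smaller lies in the $A$-set of the larger, which forces $p^{\min(a,b)}\in A_1(p^a)\cap A_2(p^b)$ and hence equals the local $\textnormal{gcd}_{(A_1,A_2)}$. The paper's argument is terser (it records the slightly stronger conclusion $p^{\min(t,s)}\in A_1(p^{\max(t,s)})\cap A_2(p^{\max(t,s)})$ in one line), but the substance is identical.
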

\begin{proof}[Proof.]
    Since, $m,n\in A_1(t_1)\cap A_2(t_2)$ so if $p^t\parallel m$ and $p^s\parallel n$ then $p^t,p^s\in A_1(t_1)\cap A_2(t_2)$. So, either $p^t\in A_1(p^s)\cap A_2(p^s)$ or $p^s\in A_1(p^t)\cap A_2(p^t)$. This clearly entails that $\text{gcd}_{(A_1,A_2)}(m,n)=\gcd(m,n)$ and $\gcd(m,n)\in A_1(m)\cap A_2(n)$. 
\end{proof}
\subsection{Properties of the generalization of Ramanujan sum}\label{ordering induced properties}

We show that $\mathbb{A}$ is a lattice endowed with $\leq $ relation. 
   
\begin{lem}\label{partial order relation}
    Let $A_1$ and $A_2$ be two regular $A$-functions. Then, the following are equivalent:\begin{enumerate}
        \item \label{poset 1} $n\in A_1(m)$ implies $A_2(n)\subseteq A_1(m)$,
        \item \label{poset 2}$A_2(n)\subseteq A_1(n)$ for all $n\in \mathbb{N}$.
    \end{enumerate}
\end{lem}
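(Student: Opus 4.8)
The plan is to prove $(\ref{poset 1}) \Rightarrow (\ref{poset 2})$ and $(\ref{poset 2}) \Rightarrow (\ref{poset 1})$ separately. The first direction is immediate: I would specialize to $m = n$ in (\ref{poset 1}), so that it suffices to observe that $n \in A_1(n)$ always holds. Indeed, by property~2 of Definition \ref{regular A function} each prime power $p^k \parallel n$ lies in $A_1(p^k)$, and multiplicativity of $A_1$ then gives $n \in A_1(n)$; hypothesis (\ref{poset 1}) now yields $A_2(n) \subseteq A_1(n)$.

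For $(\ref{poset 2}) \Rightarrow (\ref{poset 1})$, I would assume $A_2(k) \subseteq A_1(k)$ for every $k$, suppose $n \in A_1(m)$, and aim to show $A_2(n) \subseteq A_1(m)$. Since $A_1(m) \subseteq D(m)$ one has $n \mid m$, so, using the multiplicativity of $A_1$ and $A_2$ to decompose both sides into products over primes, the inclusion $A_2(n) \subseteq A_1(m)$ reduces to the prime-power assertion: if $p^a \in A_1(p^b)$, then $A_2(p^a) \subseteq A_1(p^b)$ (primes not dividing $n$ contribute only the factor $A_2(1) = \{1\}$ and are harmless).

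The crux is to establish $A_1(p^a) \subseteq A_1(p^b)$ whenever $p^a \in A_1(p^b)$; granting this, hypothesis (\ref{poset 2}) gives $A_2(p^a) \subseteq A_1(p^a) \subseteq A_1(p^b)$, which finishes the proof. This is where property~3 of Definition \ref{regular A function} enters: if $p^a \in A_1(p^b)$ with $a \ge 1$, then $p^a \mid p^b$, so $a \le b$, and property~3 forces $A_1(p^a)$ to share the type $t = \tau_{A_1}(p^b)$, i.e. $A_1(p^a) = \{1, p^t, p^{2t}, \dots, p^a\}$; since every such $p^{jt}$ with $jt \le a \le b$ and $t \mid b$ lies in $A_1(p^b) = \{1, p^t, \dots, p^b\}$, the inclusion follows (the case $a = 0$ being trivial). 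The only real difficulty will be the bookkeeping: making the prime-by-prime reduction precise from the product structure of regular $A$-functions, and extracting the ``same type'' consequence from property~3 in exactly the form used above.
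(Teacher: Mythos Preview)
Your proposal is correct and follows the same line as the paper: both directions hinge on $n\in A_1(n)$ for $(\ref{poset 1})\Rightarrow(\ref{poset 2})$ and on the chain $A_2(n)\subseteq A_1(n)\subseteq A_1(m)$ for $(\ref{poset 2})\Rightarrow(\ref{poset 1})$. The only difference is that the paper invokes the implication ``$n\in A_1(m)\Rightarrow A_1(n)\subseteq A_1(m)$'' as a known structural property of regular $A$-functions, whereas you unpack it explicitly via the prime-power reduction and property~3 of Definition~\ref{regular A function}; your version is more self-contained but not a different argument.
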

\begin{proof}[Proof.]
    Clearly (\ref{poset 1}) implies (\ref{poset 2}) because $n\in A_1(n)$ due to reflexivity of regular $A$-functions. For the converse, note that $n \in A_1(m)$ implies $ A_1(n) \subseteq A_1(m)$ and from $(2)$ $A_2(n)\subseteq A_1(n)$. 
\end{proof}

For $A_{1},A_{2}\in \mathbb{A}$ we define the relation $A_{2}\leq A_{1}$ by $A_2(n)\subseteq A_1(n)$ for all $n\in \mathbb{N}$. 

\begin{thm}\label{thm:lattice}
    $\mathbb{A}$ is a distributive and complete lattice, with $D$ and $U$ being the maximal and minimal elements, respectively. 
\end{thm}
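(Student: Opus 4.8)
The plan is to verify the lattice structure by exhibiting explicit meet and join operations on regular $A$-functions, then check distributivity and completeness, and finally identify the extremal elements. The natural candidates are the pointwise operations: for $A_1, A_2 \in \mathbb{A}$, define $(A_1 \wedge A_2)(n) = A_1(n) \cap A_2(n)$ and $(A_1 \vee A_2)(n)$ to be the smallest regular $A$-function containing both, which on prime powers should be describable explicitly. The first task is therefore to show these operations actually land in $\mathbb{A}$, i.e., that the result satisfies the three axioms of Definition \ref{regular A function}. Since regular $A$-functions are multiplicative and determined by their values on prime powers, everything reduces to prime powers. On $p^k$, a regular $A$-function restricted to $\{1, p, \dots, p^k\}$ is encoded by a single divisor $t \mid k$ (the type), with axiom (3) forcing compatibility: if $p^b \in A(p^k)$ then the type of $p^b$ divides $b$ and matches. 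So the real combinatorial content is: given two such ``type data'' on the tower of $p$-powers, do the pointwise intersection and the generated union again form valid type data?

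For the meet: $A_1(p^k) \cap A_2(p^k) = \{p^j : j \text{ is a multiple of } \mathrm{lcm}(t_1, t_2) \text{ up to the relevant cutoff}\}$ — more precisely one must track that the type function behaves well as $k$ varies, using axiom (3) for both $A_1$ and $A_2$. I would argue that $\tau_{A_1 \wedge A_2}(p^k)$ is the least common multiple of $\tau_{A_1}(p^k)$ and $\tau_{A_2}(p^k)$ whenever these are ``visible'' at level $k$, and carefully check the divisibility conditions of axioms (2)–(3). For the join, dually, I expect the type to be governed by a gcd-like operation, but this is subtler because the intersection of two regular sets is automatically ``downward closed'' in the right sense whereas a union need not be; one may need to take the regular closure and verify it exists and is unique. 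Completeness then follows because arbitrary intersections $\bigcap_i A_i(n)$ are still regular (the same prime-power check, with $\mathrm{lcm}$ replaced by the lcm of all types, which is well-defined since the tower above $p^k$ is finite), giving arbitrary meets; arbitrary joins follow formally in any complete meet-semilattice with a top element. Distributivity $A \wedge (B \vee C) = (A \wedge B) \vee (A \wedge C)$ I would check on prime powers: translated into the type data it becomes an identity relating $\mathrm{lcm}$ and $\gcd$ of the relevant type exponents on a chain, which holds because the divisors of a fixed $k$ under divisibility form a distributive lattice.

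Finally, for the extremal elements: $D$ is the top since $A(n) \subseteq D(n)$ for every regular $A$-function by the very definition (hypothesis $\{1\} \subseteq A(n) \subseteq D(n)$), and $D$ is itself regular. For $U$ being the bottom, I would show $U(n) \subseteq A(n)$ for all regular $A$ and all $n$: on a prime power $p^k$, $U(p^k) = \{1, p^k\}$, and $p^k \in A(p^k)$ always holds by axiom (2) (the type $t$ divides $k$, so $p^{k} = p^{(k/t)t} \in A(p^k)$), while $1 \in A(p^k)$ by the standing hypothesis; multiplicativity extends this to all $n$. The main obstacle I anticipate is the join operation and distributivity: intersections of regular sets are transparently regular, but showing that a well-defined smallest regular $A$-function above a given pair (or family) exists, and then verifying the distributive law at the level of type functions rather than hand-waving via ``it's a sublattice of something distributive,'' requires the careful prime-power bookkeeping described above. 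Once the translation to divisor-lattices of the exponents $k$ is set up cleanly, distributivity is inherited from the distributivity of $(\mathrm{div}(k), \mid)$, but setting up that translation compatibly across all $k$ simultaneously is where the care is needed.
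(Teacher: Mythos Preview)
Your proposal follows essentially the same route as the paper: reduce to prime powers, encode each regular $A$-function by its type $\tau_A(p^k)$, realise meet and join through $\lcm$ and $\gcd$ of the types, derive distributivity from the standard $\gcd$/$\lcm$ distributive identities, obtain completeness from the fact that each exponent $k$ has only finitely many divisors, and read off $U\leq A\leq D$ directly from $U(p^k)=\{1,p^k\}\subseteq A(p^k)\subseteq D(p^k)$. The one place you diverge slightly is in hedging on the join---proposing either a ``regular closure'' or the abstract fact that a complete meet-semilattice with top element $D$ is automatically a complete lattice---whereas the paper simply writes down $\tau_{A_1\vee A_2}(p^k)=\gcd(\tau_{A_1}(p^k),\tau_{A_2}(p^k))$ and moves on; your caution here is appropriate, since checking that this formula respects axiom~(3) of Definition~\ref{regular A function} across different exponents is precisely the cross-level bookkeeping you flag as the main obstacle, while for the pointwise intersection (the meet) that verification is automatic.
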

\begin{proof}[Proof.]
It is evident that the collection of regular $A$-functions $\mathbb{A}$ forms a poset under the relation $\leq$. It can also be easily shown that $\mathbb{A}$ is indeed a lattice with the meet and join defined for any two regular $A$-functions $A_1$ and $A_2$ as follows:
\begin{enumerate}
    \item $A_3=A_1\vee A_2$ such that $\tau_{A_3}(p^k)=\gcd(\tau_{A_1}(p^k),\tau_{A_2}(p^k))$,
    \item $A_4=A_1 \wedge A_2$ such that $\tau_{A_4}(p^k)=\lcm(\tau_{A_1}(p^k),\tau_{A_2}(p^k))$,
\end{enumerate}

It can be be easily shown that the lattice of regular $A$-functions is indeed distributive.
This is due to the following well known identities that says gcd and lcm are distributive,\[
\gcd(a,\lcm (b,c))=\lcm(\gcd(a,b),\gcd(a,c)),\]
\[\lcm(a,\gcd(b,c))=\gcd(\lcm(a,b),\lcm(a,c)).
\]

    We now show that $\mathbb{A}$ is complete i.e., every subset of regular $A$-functions has both a meet and join. Let $I$ be a set of indices which may be infinite. Note that for a fixed prime power $p^k$, $\tau_{A_i}(p^k), i\in I$ has only finitely many distinct values as $k$ has finitely many divisors. Call them $\{t_1,\dots, t_l\}$. Then define a new $A$- function with $\tau_A(p^k)=\gcd(t_1,\dots, t_k)$. It is clear that $A=\vee_{i\in I}A_i$. The argument for the existence of meet is similar.

    From the definition, for $k>1$ and $p$ prime we have 
    $$
    U(p^{k})=\{1,p^{k}\}\quad \textnormal{and}\quad D(p^{k})=\{1,p,\cdots,p^{k}\},
    $$
    which are two extremities. Therefore, $U(p^{k})\leq A(p^{k})\leq D(p^{k})$ for all $A\in \mathbb{A}$.     
     
\end{proof}

\begin{figure}
    \centering
    \includegraphics[width=0.4\linewidth]{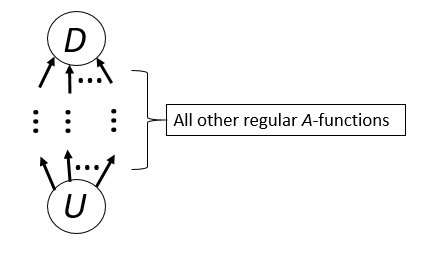}
    \caption{A depiction of the  poset of regular $A$-functions. Here $D$ denotes a complete set of divisors and $U$ is the set of unitary divisors.}
    \label{fig:struture A func}
\end{figure}

In what follows, we will relate the partial ordering on the regular $A$-functions to the existence of a Von Sterneck-like form for our generalization of Ramanujan sum. Von Sterneck(1902)  used the function,\[
\Phi(m,n)=\frac{\varphi(n)\mu(n/(m,n))}{\varphi(n/(m,n))},
\]
in certain combinatorial problems (see \cite{Von_Sterneck_Nicol}). The Dedekind-H\"{o}lders theorem \cite{Hölder_1936} states that the Von Sterneck's function is identical to the original Ramanujan sum. Several researchers generalized the Von Sterneck function and associated them with  generalizations of Ramanujan sum (see for instance \cite{Cohen_connections_with_totient_functions,DSuryaNarayana_VonSterneck,huang2008generalization,Haukkanen_generalization}). We define an analogue of the Von Sterneck's function as follows:\begin{equation}\label{Von-Sterneck eq}
    \Phi_{(A_1,A_2)}(m,n)=\frac{\varphi_{A_2}(n)\mu_{A_2}\left(\frac{n}{g}\right)}{\varphi_{A_2}\left(\frac{n}{g}\right)},
\end{equation}
where $g=\text{gcd}_{(A_1,A_2)}(m,n)$
and $\varphi_A(\cdot)$ is the $A$-function analogue of Euler totient function defined by:\begin{equation}\label{reg euler totient}
  \varphi_A(n)=|\{x: 1\leq x\leq n, D(x)\cap A(n)=\{1\}\}|.  
\end{equation}
It is can easily see that $\Phi_{A_{1},A_{2}}(m,n)$ is multiplicative. 

\begin{thm}[\textbf{Generalization of H\"{o}lder's relation}]\label{VonSterneck} Let $A_1,A_2$ be two regular $A$-functions. Then $A_2\leq A_1$ is necessary and sufficient for the identity $C_{(A_1,A_2)}(m,n)=\Phi_{(A_1,A_2)}(m,n)$ to hold. 
\end{thm}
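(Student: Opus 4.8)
The plan is to prove both directions by reduction to prime powers, since both $C_{(A_1,A_2)}(m,n)$ and $\Phi_{(A_1,A_2)}(m,n)$ are multiplicative in $n$ (for fixed $m$) and the relation $A_2\leq A_1$ can be checked prime-power-by-prime-power. So it suffices to compare $C_{(A_1,A_2)}(m,p^k)$ with $\Phi_{(A_1,A_2)}(m,p^k)$ for every prime power $p^k$, where (since everything localizes at $p$) we may as well take $m=p^j$ for arbitrary $j\geq 0$. Write $A_1(p^k)=\{1,p^s,p^{2s},\dots\}$-type data via the type function, and similarly for $A_2$; set $r=\tau_{A_2}(p^k)$. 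By Corollary~\ref{prime_eval} we have $C_{(A_1,A_2)}(p^j,p^k)=\delta_{p^k}p^k-\delta_{p^{k-r}}p^{k-r}$ where $\delta_x=1$ iff $x\in A_1(p^j)$. On the other side, I would compute $\Phi_{(A_1,A_2)}(p^j,p^k)$ directly from its definition \eqref{Von-Sterneck eq}: let $g=\mathrm{gcd}_{(A_1,A_2)}(p^j,p^k)$, and use the prime-power values of $\varphi_{A_2}$ (namely $\varphi_{A_2}(p^k)=p^k-p^{k-r}$ when $r=\tau_{A_2}(p^k)$, and $\varphi_{A_2}(1)=1$) together with the values of $\mu_{A_2}$ from Theorem~\ref{regular convolution remark}(3). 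This gives $\Phi_{(A_1,A_2)}(p^j,p^k)$ equal to: $p^k-p^{k-r}$ if $g=1$; $-\,\varphi_{A_2}(p^k)/\varphi_{A_2}(p^{k-r})$ if $p^{k}/g$ is $A_2$-primitive, i.e. $g=p^{k-r}$; and $0$ if $p^k/g$ is not $1$ and not $A_2$-primitive.

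Next I would match the two expressions case by case, which forces the condition $A_2(p^k)\subseteq A_1(p^k)$. The key observation is that $g=\mathrm{gcd}_{(A_1,A_2)}(p^j,p^k)=\max(A_1(p^j)\cap A_2(p^k))$, so the distinction between the cases of $\Phi$ is governed by how much of the chain $A_2(p^k)$ sits inside $A_1(p^j)$. If $A_2\leq A_1$, then $A_2(p^k)\subseteq A_1(p^k)$, and one checks that $A_1(p^j)\cap A_2(p^k)$ is an initial segment of the chain $A_2(p^k)$ (because $A_1(p^j)$ is itself a chain containing $A_1(\gcd(p^j,p^k))$ and, restricted to divisors of $p^k$, behaves compatibly with $A_2(p^k)\subseteq A_1(p^k)$); hence $g\in\{1,p^{k-r},\dots\}$ but more precisely, combining with the fact that $\delta_{p^k}=1 \iff p^k\in A_1(p^j)$ and $\delta_{p^{k-r}}=1\iff p^{k-r}\in A_1(p^j)$, the three cases of $\Phi$ line up exactly with the three possible values of $\delta_{p^k}p^k-\delta_{p^{k-r}}p^{k-r}$, using the identity $\varphi_{A_2}(p^k)/\varphi_{A_2}(p^{k-r})=p^r$ when $p^{k-r}$ is the primitive element, i.e. when $k-r$ is a single "block" — this last point needs property (3) of Definition~\ref{regular A function} to guarantee $\tau_{A_2}(p^{k-r})=\tau_{A_2}(p^k)=r$ only in the relevant situation, so I would be careful to verify $\varphi_{A_2}(p^{k-r})=p^{k-r}-p^{k-2r}$ when $k-r>0$ is a multiple of $r$ that is itself $A_2$-admissible.

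For the converse, I would argue by contrapositive: suppose $A_2\not\leq A_1$, so there is a prime $p$ and an exponent $k$ with $p^k\in A_2(p^k)$ but $p^k\notin A_1(p^k)$; more usefully, pick the smallest $A_2$-primitive power $p^r$ (with $r=\tau_{A_2}(p^r)$) that is not in $A_1(p^r)$. Taking $m=n=p^r$, I would compute $C_{(A_1,A_2)}(p^r,p^r)=\delta_{p^r}p^r-\delta_{1}\cdot 1=0-1=-1$ (since $p^r\notin A_1(p^r)$ forces $\delta_{p^r}=0$, while $\delta_1=1$ always), whereas $g=\mathrm{gcd}_{(A_1,A_2)}(p^r,p^r)=\max(A_1(p^r)\cap A_2(p^r))$, and since $A_2(p^r)=\{1,p^r\}$ is $A_2$-primitive but $p^r\notin A_1(p^r)$, we get $g=1$, so $\Phi_{(A_1,A_2)}(p^r,p^r)=\varphi_{A_2}(p^r)=p^r-1\neq -1$. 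Thus the identity fails, proving necessity. I expect the main obstacle to be the bookkeeping in the forward direction: carefully identifying when $k-r$ is again an $A_2$-admissible exponent so that the totient ratio collapses to $p^r$, and confirming that $A_1(p^j)\cap A_2(p^k)$ really is an initial segment of $A_2(p^k)$ under the hypothesis $A_2\leq A_1$ — this uses both the chain structure of regular $A$-functions on prime powers and the "heredity" axiom (3). Everything else is routine substitution into the definitions.
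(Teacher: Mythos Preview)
Your overall strategy---reduce to prime powers via multiplicativity, use Corollary~\ref{prime_eval} on the $C$ side, and evaluate $\Phi$ directly from its definition---is exactly the paper's approach. Two corrections are needed, one minor and one substantive.

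\emph{Forward direction (minor slips).} In your case list for $\Phi_{(A_1,A_2)}(p^j,p^k)$ you have the denominator wrong: the definition (\ref{Von-Sterneck eq}) has $\varphi_{A_2}(n/g)$ in the denominator, so when $g=p^{k-r}$ you should divide by $\varphi_{A_2}(p^r)$, not $\varphi_{A_2}(p^{k-r})$. With the correct denominator the ratio is $(p^k-p^{k-r})/(p^r-1)=p^{k-r}$, matching $C=-p^{k-r}$; your version would give $-p^r$, which does not match. Likewise your first case ``$g=1$'' should read ``$g=p^k$'' (equivalently $n/g=1$). These are bookkeeping errors, not structural ones, and once fixed your case analysis coincides with the paper's.

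\emph{Necessity (genuine gap).} Your chosen witness cannot exist: for \emph{any} regular $A$-function one has $n\in A(n)$ (from Definition~\ref{regular A function}(2), $A(p^k)=\{1,p^t,\dots,p^k\}$ always contains $p^k$), so ``$p^r\notin A_1(p^r)$'' is impossible. With $m=n=p^r$ you in fact get $\delta_{p^r}=1$, hence $C_{(A_1,A_2)}(p^r,p^r)=p^r-1$, and since $g=\max(A_1(p^r)\cap A_2(p^r))=p^r$ you also get $\Phi_{(A_1,A_2)}(p^r,p^r)=\varphi_{A_2}(p^r)=p^r-1$; the two sides agree and nothing is proved. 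The paper's fix is to take $n=p^k$ with $A_2(p^k)\nsubseteq A_1(p^k)$ and $m=p^k$: writing $r=\tau_{A_2}(p^k)$ and $s=\tau_{A_1}(p^k)$, the hypothesis forces $s\nmid r$, hence $s\nmid(k-r)$, so $p^{k-r}\notin A_1(p^k)$ while $p^k\in A_1(p^k)$. Then $C_{(A_1,A_2)}(p^k,p^k)=p^k$ but $g=p^k$ gives $\Phi_{(A_1,A_2)}(p^k,p^k)=\varphi_{A_2}(p^k)=p^k-p^{k-r}$, and these differ. Replace your converse witness with this one.
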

\begin{proof}[Proof.]
    Again by multiplicativity, it suffices to prove this result for the prime power case only i.e., when $n=p^k$. Let $r=\tau_{A_2}(p^k)$, Assuming  $A_2\leq A_1$, from Corollary \ref{prime_eval} and Theorem \ref{partial order relation} it follows \[C_{(A_1,A_2)}(m,p^k)=\begin{cases}
            p^k-p^{k-r}, &p^k\in A_1(m)\\
            -p^{k-r}, &p^k\notin A_1(m),\ p^{k-r}\in A_1(m)\\
            0, &\text{ otherwise}
        \end{cases}\]
        Also notice that if $p^k\in A_1(m)$, $g=\text{gcd}_{(A_1,A_2)}(m,p^k)=p^k$
        and if $p^k\notin A_1(m), p^{k-r}\in A_1(m)$, $g=p^{k-r}$. It follows that \begin{equation*}
            \mu_{A_2}\left(\frac{p^k}{g}\right)=\begin{cases}
                1, &\text{ if }p^k\in A_1(m)\\
                -1, &\text{ if }p^{k}\notin A_1(m),\ p^{k-r}\in A_1(m)\\
                0, &\text{ otherwise}
            \end{cases}
        \end{equation*}
        So, in case $p^k\in A_1(m)$, $\varphi_{A_2}(p^k)=p^k-p^{k-r}$. In case $p^k\notin A_1(m),p^{k-r}\in A_1(m)$,\begin{align*}
            \frac{\varphi_{A_2}(p^k)}{\varphi_{A_2}(p^r)}=&\frac{p^k-p^{k-r}}{p^r-1}=p^{k-r}
        \end{align*}\\
        Conversely, suppose $A_{2}\nleq A_{1}$. Assume that for some $m$ and prime-power $p^k$, $p^k\in A_1(m)$, $p^{k-r}\notin A_1(m)$. The existence of such an $m$ and $p^k$ is guaranteed as $A_2\nleq A_1$. So  from Corollary \ref{prime_eval}, $C_{(A_1,A_2)}(m,p^k)=p^k$,$\varphi_{A_2}(p^k)=p^k-p^{k-r}$ and $\gcd_{(A_1,A_2)}(m,p^k)=p^k$. The inequality can be verified directly.  
\end{proof}
Since $A\leq D$ for every $A \in \mathbb{A}$ (see Figure \ref{fig:struture A func}), Theorem \ref{VonSterneck} for $A_1=D$  and arbitrary regular $A_2$ gives,\[
C_{(D,A_2)}(m,n)=\frac{\varphi_{A_2}(n)\mu_{A_2}(n/\gcd_{(D,A_2)}(m,n))}{\varphi_{A_2}(n/\gcd_{(D,A_2)}(m,n))},
\]
which is the identity given by McCarthy \cite[Theorem 1]{McCarthy_regular_arithmetical_convolution}. 
Also, for every $A\in \mathbb{A}$, $U\leq A$ so we can state the following corollary:
\begin{cor}
    For $A\in \mathbb{A}$ we have $
    C_{(A,U)}(m,n)=\Phi_{(A,U)}(m,n). 
    $
\end{cor}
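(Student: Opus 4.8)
The plan is to obtain the corollary as an immediate specialization of Theorem \ref{VonSterneck}. That theorem states that, for $A_1,A_2\in\mathbb{A}$, the condition $A_2\leq A_1$ is necessary and sufficient for the identity $C_{(A_1,A_2)}(m,n)=\Phi_{(A_1,A_2)}(m,n)$; hence it suffices to check the hypothesis $A_2\leq A_1$ in the special case $A_1=A$, $A_2=U$.

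First I would record the inclusion $U\leq A$. By Theorem \ref{thm:lattice}, $U$ is the minimal element of the lattice $\mathbb{A}$, so $U\leq A$ for every $A\in\mathbb{A}$. Concretely, for a prime power $p^k$ with $k\geq 1$ we have $U(p^k)=\{1,p^k\}$, while property (2) of a regular $A$-function forces $1\in A(p^k)$ and $p^k\in A(p^k)$, i.e. $\{1,p^k\}\subseteq A(p^k)$; thus $U(p^k)\subseteq A(p^k)$, and since both $U$ and $A$ are multiplicative set-valued functions this extends to $U(n)\subseteq A(n)$ for all $n\in\mathbb{N}$. With $A_2=U\leq A=A_1$ verified, the sufficiency direction of Theorem \ref{VonSterneck} yields $C_{(A,U)}(m,n)=\Phi_{(A,U)}(m,n)$, where by (\ref{Von-Sterneck eq}) the right-hand side is $\varphi_{U}(n)\,\mu_{U}\!\left(n/g\right)/\varphi_{U}\!\left(n/g\right)$ with $g=\gcd_{(A,U)}(m,n)$ (here $\varphi_U$ and $\mu_U$ are the unitary totient $\varphi^{*}$ and unitary Möbius function $\mu^{*}$).

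There is essentially no obstacle: the entire content is carried by Theorem \ref{VonSterneck}, and the only thing to verify is the trivial inclusion $U\leq A$, which is forced by the definition of a regular $A$-function. If a self-contained argument were desired instead, one could rerun the prime-power computation in the proof of Theorem \ref{VonSterneck} for this case, noting that $\tau_U(p^k)=k$ so that $r=k$ in Corollary \ref{prime_eval} and the three cases there reduce to "$p^k\in A(m)$" versus "$p^k\notin A(m)$"; but this merely re-derives a special instance of the theorem and is not needed.
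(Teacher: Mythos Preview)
Your proposal is correct and matches the paper's approach exactly: the paper simply observes that $U\leq A$ for every $A\in\mathbb{A}$ (by Theorem~\ref{thm:lattice}) and then invokes Theorem~\ref{VonSterneck}, which is precisely what you do.
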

Surprisingly, the partial ordering of regular $A$-functions can also be related to another property of the generalized Ramanujan sums stated below: 
\begin{thm}[Orthogonality]\label{orthogonality result}
   Let $A_1,A_2$ be regular $A$-functions then the condition $A_2\leq A_1$ is necessary and sufficient for:
     \begin{equation}\label{orthogonality}
        \sum_{d\in A_2(n)}C_{(A_1,A_2)}(n/d,\delta)C_{(A_1,A_2)}(n/\gamma,d)=\begin{cases}
            n, &\delta=\gamma\\
            0, &\delta\neq \gamma
            
        \end{cases} 
    \end{equation}
    for $\delta,\gamma\in A_2(n)$.
\end{thm}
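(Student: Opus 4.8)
The plan is to prove both implications by reducing to the case $n=p^k$, exactly as in the proof of Theorem \ref{VonSterneck}: the left-hand side of (\ref{orthogonality}), call it $S(\delta,\gamma;n)=\sum_{d\in A_2(n)}C_{(A_1,A_2)}(n/d,\delta)C_{(A_1,A_2)}(n/\gamma,d)$, is multiplicative in $n$ in a suitable sense, and so is the claimed value $n\cdot[\delta=\gamma]$, so it suffices to check prime powers. Then, for $n=p^k$, sufficiency becomes a short finite telescoping computation once Corollary \ref{prime_eval} and the hypothesis $A_2\le A_1$ are used to put the two factors in closed form, while necessity is handled by exhibiting an explicit failure at a prime power where $A_2(p^k)\not\subseteq A_1(p^k)$.

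For the reduction, write $n=n_1n_2$ with $\gcd(n_1,n_2)=1$. Using $A_2(n)=A_2(n_1)A_2(n_2)$ (with unique factorizations), every $d\in A_2(n)$ writes uniquely as $d_1d_2$, and similarly $\delta=\delta_1\delta_2$, $\gamma=\gamma_1\gamma_2$ with $\delta_i,\gamma_i\in A_2(n_i)$. By Theorem \ref{multiplicativity}(\ref{point 1 mult}) (multiplicativity in the second argument, since $\gcd(\delta_1,\delta_2)=1$) and the elementary observation from Definition \ref{definition of generalization of RSum} that $C_{(A_1,A_2)}\bigl((n_1/d_1)(n_2/d_2),\delta_1\bigr)=C_{(A_1,A_2)}(n_1/d_1,\delta_1)$ — because every element of $A_2(\delta_1)$ is coprime to $n_2/d_2$, so the $n_2$-part of $n/d$ contributes nothing to the defining sum — one gets $C_{(A_1,A_2)}(n/d,\delta)=C_{(A_1,A_2)}(n_1/d_1,\delta_1)\,C_{(A_1,A_2)}(n_2/d_2,\delta_2)$ and likewise for $C_{(A_1,A_2)}(n/\gamma,d)$. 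Hence $S(\delta,\gamma;n)=S(\delta_1,\gamma_1;n_1)\,S(\delta_2,\gamma_2;n_2)$; since $\delta=\gamma\iff(\delta_1=\gamma_1$ and $\delta_2=\gamma_2)$, it is enough to treat $n=p^k$.

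For the prime-power case of sufficiency, set $r=\tau_{A_2}(p^k)$ and $k=mr$, so $A_2(p^k)=\{p^{jr}:0\le j\le m\}$ and $\delta=p^{ar}$, $\gamma=p^{br}$ for some $0\le a,b\le m$; note $\tau_{A_2}(p^{ir})=r$ for $1\le i\le m$ by Definition \ref{regular A function}(3). The key point is that $A_2\le A_1$ forces $A_2(p^{(m-j)r})\subseteq A_1(p^{(m-j)r})$, so $p^{ir}\in A_1(p^{(m-j)r})$ if and only if $i+j\le m$; substituting this into Corollary \ref{prime_eval} gives, for $a\ge1$,
\[
C_{(A_1,A_2)}\bigl(p^{(m-j)r},p^{ar}\bigr)=
\begin{cases}
p^{ar}-p^{(a-1)r}, & 0\le j\le m-a,\\
-\,p^{(a-1)r}, & j=m-a+1,\\
0, & j\ge m-a+2,
\end{cases}
\]
together with $C_{(A_1,A_2)}(p^{(m-j)r},1)=1$ for all $j$, and symmetrically for $C_{(A_1,A_2)}(p^{(m-b)r},p^{jr})$. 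Then $S(\delta,\gamma;p^k)=\sum_{j=0}^{m}C_{(A_1,A_2)}(p^{(m-j)r},\delta)\,C_{(A_1,A_2)}(p^{(m-b)r},p^{jr})$ is a finite sum of at most $m+2$ explicit terms: when $a=b$ the geometric sums telescope and the extra term at $j=m-a+1$ fills the resulting gap, yielding $p^{mr}=p^k$; when $a\neq b$ the two factors have overlapping support only for $0\le j\le m-\max(a,b)+1$, where one factor is constant and the other telescopes to $0$, yielding $S=0$. This computation is the most tedious part but is entirely elementary once the factors are in the closed form above.

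For necessity, argue contrapositively: if $A_2\nleq A_1$, then since $A_1,A_2$ are multiplicative set-functions there is a prime power $p^k$ with $A_2(p^k)\not\subseteq A_1(p^k)$; writing $s=\tau_{A_1}(p^k)$ and $r=\tau_{A_2}(p^k)$ this says $s\nmid r$, and one checks $p^k$ cannot then be $A_2$-primitive, so $k>r$. Take $n=\delta=\gamma=p^k$. Since $C_{(A_1,A_2)}(n/\gamma,d)=C_{(A_1,A_2)}(1,d)=\mu_{A_2}(d)$, only $d=1$ and $d=p^r$ survive, and Corollary \ref{prime_eval} gives
\[
S(p^k,p^k;p^k)=p^k+\bigl(1-[\,s\mid k-r\,]\bigr)\,p^{k-r}=p^k+p^{k-r}\neq p^k,
\]
where $[\,s\mid k-r\,]=0$ because $s\mid k$ and $s\nmid r$ force $s\nmid k-r$. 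As $\delta=\gamma$, this contradicts (\ref{orthogonality}), so the orthogonality relation implies $A_2\le A_1$. The expected main obstacle is purely organizational: carrying out the finite prime-power sum in the sufficiency direction cleanly across the three cases $a=b=0$, $a=b\ge1$, and $a\neq b$; the multiplicativity reduction and the necessity direction are routine.
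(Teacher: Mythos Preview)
Your proof is correct. The sufficiency direction follows the paper's route exactly: reduce to $n=p^k$ by multiplicativity and evaluate via Corollary~\ref{prime_eval}; you actually give more detail than the paper, which only writes out the case $\delta=\gamma$ explicitly and leaves $\delta\neq\gamma$ to the reader. One small slip in your sketch: the phrase ``one factor is constant and the other telescopes to $0$'' for $a\neq b$ is literally true only when $a<b$; when $a>b$ neither factor is constant on the overlap $0\le j\le m-a+1$, but splitting off the single term $j=m-a+1$ still yields the cancellation $(p^{mr}-p^{(m-1)r})+(-p^{mr}+p^{(m-1)r})=0$, so the conclusion stands.

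Where you genuinely depart from the paper is the necessity direction. The paper selects a \emph{minimal} prime power $p^l$ with $A_2(p^l)\not\subseteq A_1(p^l)$, uses minimality to force $l=\lcm(\tau_{A_1}(p^l),\tau_{A_2}(p^l))$, and then exhibits an \emph{off-diagonal} failure: with $n=p^l$, $\delta=p^{\tau_{A_2}(p^l)}$, $\gamma=1$, the sum evaluates to $-p^l-p^{\tau_{A_2}(p^l)}\neq 0$. Your \emph{diagonal} choice $n=\delta=\gamma=p^k$ avoids the minimality argument entirely: only $d\in\{1,p^r\}$ survive and the single divisibility observation $s\mid k,\ s\nmid r\Rightarrow s\nmid k-r$ gives $S=p^k+p^{k-r}\neq p^k$. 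Your route is shorter and more direct; the paper's has the incidental advantage of showing that the off-diagonal part of (\ref{orthogonality}) already fails, not just the normalization.
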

\begin{proof}[Proof.]
    From Theorem \ref{multiplicativity} and Corollary \ref{prime_eval} it follows that \[C_{(A_1,A_2)}(m,n)=\prod_{p\in \mathbb{P}}C_{(A_1,A_2)}(p^{\nu_p(m)},p^{\nu_p(n)}),\] 
    where $\mathbb{P}$ is the collection of primes and $\nu_p(\cdot)$ is the p-adic valuation function. This means the LHS in Equation \ref{orthogonality} can be written as,\[
    \prod_{p\in \mathbb{P}}\sum_{d\in A_2(n)}C_{(A_1,A_2)}(p^{\nu_p(n)}/p^{\nu_p(d)},p^{\nu_p(\delta)})C_{(A_1,A_2)}(p^{\nu_p(n)}/p^{\nu_p(\gamma)},p^{\nu_p(d)}).
    \] So, it is enough if we prove this result for the prime case only i.e., for $n=p^{k}.$ Let $A_2(p^k)=\{1,p^r,p^{2r},\dots, p^k\}$ and $p^s,p^t\in A_2(p^k)$. We evaluate the expression by for the case $s=t$ only as the other cases namely $s<t$ and $s>t$ can be evaluated in similar lines.
    
    For $s=t$, consider the left hand side of Equation \ref{orthogonality} at $n=p^{k}$. Note that when $t=0$ then the result follows from Theorem \ref{res:sum_reg_div}.
    When $l>k-t+r$ then from Corollary \ref{prime_eval} it follows that $C_{(A_1,A_2)}(p^{k-t},p^l)=0$. When $l=k-t+r$, the left hand side of Equation \ref{orthogonality} becomes \[
    C_{(A_1,A_2)}(p^{t-r},p^t)C_{(A_1,A_2)}(p^{k-t},p^{k-t+r})=p^{t-r}p^{k-t}.
    \]
    Also, for every $l\leq k-t$,\begin{align*}
    C_{(A_1,A_2)}(p^{k-l},p^t)&=p^t-p^{t-r}\quad \text{ and }\\
    C_{(A_1,A_2)}(p^{k-t},p^l)&=\begin{cases}
        1, &\text{ for }l=0,\\
        p^l-p^{l-r}, &\text{ for }l\neq0.
    \end{cases}
    \end{align*}
    Combining these terms we obtain
    \begin{align*}
        \sum_{l\leq k-t}&C_{(A_1,A_2)}(p^{k-l},p^t)C_{(A_1,A_2)}(p^{k-t},p^l)+C_{(A_1,A_2)}(p^{t-r},p^t)C_{(A_1,A_2)}(p^{k-t},p^{k-t+r})\\
            &= (p^t-p^{t-r})(1+p^r-1+p^{2r}-p^r+\dots+ p^{k-t}-p^{k-t-r})+p^{t-r}p^{k-t}\\
            &= (p^t-p^{t-r})p^{k-t}+p^{t-r}p^{k-t} = p^k.
    \end{align*}
    We will prove the converse by showing that if $A_2\nleq A_1$ then Equation \ref{orthogonality} doesn't hold.
    We assume $A_2\nleq A_1$, so there is a prime $p$ and $k>1$ such that $A_2(p^k)\nsubseteq A_1(p^k)$. For the prime $p$ let $l>1$ be the least integer with this property i.e.,\[
    A_2(p^l)\nsubseteq A_1(p^l).
    \] 
    Let
    $$
    A_2(p^l)=\{1,p^s,p^{2s},\dots, p^l\}\quad \textnormal{ and }\quad A_1(p^l)=\{1,p^r,p^{2r},\dots, p^l\}.
    $$
    Since, $A_2(p^l)\nsubseteq A_1(p^l)$, so $r\nmid s$. By the minimality of $l$ we must have $l=\lcm(r,s)$ and $
    A_2(p^{l'})\subseteq A_1(p^{l'})$, when $l'<l$.
    
    We now evaluate Equation \ref{orthogonality} with $n=p^l, \delta=p^s\text{ and } \gamma=1$.
    \begin{equation}\label{eq:prime_orthogonality_case_2}
    \sum_{d\in A_2(p^l)}C_{(A_1,A_2)}(p^l/d,p^s)C_{(A_1,A_2)}(p^l,d)
    \end{equation}
    
    Since $l=\lcm(r,s)$ we have $p^{ks}\notin A_1(p^l)$ for $1\leq k<l/s$, which means $C_{(A_1,A_2)}(p^l,d)$ is non-zero only for $d\in \{1,p^s,p^l\}$. Hence, summing the non-zero terms in Equation \ref{eq:prime_orthogonality_case_2}.
    \begin{align*} 
        &C_{(A_1,A_2)}(p^l,p^s)C_{(A_1,A_2)}(p^l,1)+C_{(A_1,A_2)}(p^{l-s},p^s)C_{(A_1,A_2)}(p^l,p^s)+C_{(A_1,A_2)}(1,p^s)C_{(A_1,A_2)}(p^l,p^l)\\
        &=(-1)1+(p^s-1)(-1)+(-1)p^l\\
        &=-p^l-p^s\neq 0
    \end{align*}  
\end{proof} 
A special case of Theorem \ref{orthogonality result} with $A_1 = A_2 = U$ gives a modified unitary sum orthogonality relation by T\'{o}th \cite{Toth_Modified_Ramsums}, which, to the best of our knowledge, is not known in the literature.

\begin{thm}\label{thm: bound}
    If $A_2\leq A_1$ then \begin{equation*}
        \sum_{d\in A_2(n)}|C_{(A_1,A_2)}(m,d)|=2^{\omega(\frac{n}{g})}g,
    \end{equation*}
    where $g=\textnormal{gcd}_{(A_1,A_2)}(m,n)$ and $\omega(n)$ denotes the function that returns the number of distinct primes in the prime factorization of $n$ with $\omega(1)=0$.
\end{thm}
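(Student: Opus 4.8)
The plan is to reduce to prime powers, use the hypothesis $A_2\le A_1$ to determine the set $A_1(p^{a})\cap A_2(p^{k})$ exactly, and then observe that the absolute values of the terms $C_{(A_1,A_2)}$ telescope. First I would reduce to prime powers. By Theorem~\ref{multiplicativity} the map $d\mapsto C_{(A_1,A_2)}(m,d)$ is multiplicative, and $C_{(A_1,A_2)}(m,p^{j})=C_{(A_1,A_2)}(p^{\nu_p(m)},p^{j})$ because multiplicativity of $A_1$ gives $A_1(m)\cap\{1,p,p^{2},\dots\}=A_1(p^{\nu_p(m)})$; hence $d\mapsto|C_{(A_1,A_2)}(m,d)|$ is multiplicative. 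Since $A_2$ is a multiplicative set-function and $\textnormal{gcd}_{(A_1,A_2)}$ is multiplicative in its second argument (established above), both sides of the asserted identity are multiplicative in $n$, so it suffices to treat $n=p^{k}$, $m=p^{a}$ with $a=\nu_p(m)$ and $g=\textnormal{gcd}_{(A_1,A_2)}(p^{a},p^{k})$. The case $k=0$ is immediate, since both sides equal $1$; so assume $k\ge 1$.

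Next I would determine $A_1(p^{a})\cap A_2(p^{k})$. Write $r=\tau_{A_2}(p^{k})$, so $A_2(p^{k})=\{p^{mr}:0\le m\le K\}$ where $K=k/r$, and write $g=p^{G}$ with $0\le G\le k$. By the theorem that $d\in A_1(m')\cap A_2(n')$ if and only if $d\in A_1(g')\cap A_2(g')$, where $g'=\textnormal{gcd}_{(A_1,A_2)}(m',n')$, we have $A_1(p^{a})\cap A_2(p^{k})=A_1(p^{G})\cap A_2(p^{G})$; and since $A_2\le A_1$ forces $A_2(p^{G})\subseteq A_1(p^{G})$, this intersection equals $A_2(p^{G})$. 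If $G\ge 1$ then $p^{G}\in A_2(p^{k})$, so property~(3) of Definition~\ref{regular A function} gives $A_2(p^{G})=\{1,p^{r},p^{2r},\dots,p^{G}\}$ and in particular $r\mid G$; if $G=0$ the intersection is $\{1\}$. Hence, with $Q:=G/r\in\{0,1,\dots,K\}$, the quantity $\delta_{p^{mr}}$ of Corollary~\ref{prime_eval} satisfies $\delta_{p^{mr}}=1$ for $m\le Q$ and $\delta_{p^{mr}}=0$ for $m>Q$.

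Then I would compute the sum. For $1\le m\le K$ property~(3) again gives $\tau_{A_2}(p^{mr})=r$, so Corollary~\ref{prime_eval} yields $C_{(A_1,A_2)}(p^{a},p^{mr})=\delta_{p^{mr}}p^{mr}-\delta_{p^{(m-1)r}}p^{(m-1)r}$, while $C_{(A_1,A_2)}(p^{a},1)=1$. Using the description of $\delta$ above: the summand for $1\le m\le Q$ equals $p^{mr}-p^{(m-1)r}>0$, the summand for $m=Q+1$ (which occurs exactly when $Q<K$) equals $-p^{Qr}$, and all remaining summands vanish. Summing absolute values and telescoping,
\[
\sum_{d\in A_2(p^{k})}\bigl|C_{(A_1,A_2)}(p^{a},d)\bigr|=1+\bigl(p^{Qr}-1\bigr)+\begin{cases} p^{Qr} & \text{if } Q<K,\\ 0 & \text{if } Q=K,\end{cases}
\]
so the sum is $p^{Qr}$ when $Q=K$ and $2\,p^{Qr}$ when $Q<K$. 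Since $g=p^{Qr}$ and $p^{k}/g=p^{(K-Q)r}$, we have $\omega(p^{k}/g)=0$ when $Q=K$ and $\omega(p^{k}/g)=1$ when $Q<K$, so the sum equals $2^{\omega(p^{k}/g)}g$; recombining the prime-power factors finishes the proof.

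The hard part is the second step: it is precisely the hypothesis $A_2\le A_1$, applied at $g=p^{G}$, that makes $A_1(p^{a})\cap A_2(p^{k})$ the full initial segment $\{1,p^{r},\dots,p^{G}\}$ of the $A_2$-chain rather than a sparser set; without this the intermediate values $C_{(A_1,A_2)}(p^{a},p^{mr})$ need not telescope and the sum can strictly exceed $2^{\omega(p^{k}/g)}g$, mirroring the breakdown of orthogonality in Theorem~\ref{orthogonality result} for $A_2\nleq A_1$. The first and third steps are routine bookkeeping.
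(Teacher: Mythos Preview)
Your proof is correct and follows essentially the same approach as the paper's: reduce to prime powers by multiplicativity, use Corollary~\ref{prime_eval} to evaluate $C_{(A_1,A_2)}(p^{a},p^{mr})$ explicitly, and telescope the absolute values. Your write-up is in fact more careful than the paper's---you make explicit where the hypothesis $A_2\le A_1$ is used (to force $A_1(p^{a})\cap A_2(p^{k})$ to be the full initial segment $\{1,p^{r},\dots,p^{G}\}$) and you separately treat the case $Q=K$ (i.e.\ $g=p^{k}$), which the paper's computation silently omits.
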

\begin{proof}[Proof.]
    The result is clearly true for $n=1$. We will verify the result for $n=p^k$ and the rest will follow from multiplicativity.
    
    Let $\text{gcd}_{(A_1,A_2)}(m,p^k)=p^{br}$ where $r=\tau_{A_2}(p^k)$. Therefore when we sum the non-zero terms using Corollary \ref{prime_eval} we get: \begin{align*}
        &|C_{(A_1,A_2)}(m,1)|+|C_{(A_1,A_2)}(m,p^r)|+\dots+|C_{(A_1,A_2)}(m,p^{br})|+|C_{(A_1,A_2)}(m,p^{r(b+1)})|\\
        & =1+(p^r-1)+(p^{2r}-p^r)+\dots+(p^{br}-p^{br-r})+p^{br}=2p^{br}.
    \end{align*}

This implies $\sum_{d\in A_2(n)}|C_{(A_1,A_2)}(m,d)|= 2^{\omega(n)}m$.
\end{proof}

\subsection{Trigonometric sum}
It is known that \begin{equation}\label{exp_form}
    C_{D,A}(m,n)=\sum_{d\in D(m)\cap A(n)}d\mu_A\left(\frac{n}{d}\right)=\sum_{\substack{k=1\\\gcd_{(D,A)}(k,n)=1}}^n e^{\frac{2\pi i k m}{n}},
\end{equation}
for an arbitrary regular $A$-function $A$ (see \cite[Theorem 1]{semi-regularconvolution}). We now show that this is the only case which is of the form of a trigonometric sum. For this purpose, we slightly generalize the result in \cite[Lemma 1]{semi-regularconvolution}, and the proof follows similar lines.

\begin{lem} \label{sum_mobius_function}
    If $A_2\leq A_1$ then,\[
    \sum_{d\in A_1(k)\cap A_2(n)}\mu_{A_2}(d)=\begin{cases}
        1, &\text{ if }A_1(k)\cap A_2(n)=\{1\}\\
        0, &\text{ otherwise}
    \end{cases}
    \]
\end{lem}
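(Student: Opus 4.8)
The plan is to reduce everything to prime powers using multiplicativity, then carry out a direct sum evaluation. First I would observe that both sides of the claimed identity behave multiplicatively over the coprime factorization $k = \prod p^{\nu_p(k)}$, $n = \prod p^{\nu_p(n)}$: the summation set $A_1(k)\cap A_2(n)$ factors as a product of the local sets $A_1(p^{\nu_p(k)})\cap A_2(p^{\nu_p(n)})$ (using multiplicativity of regular $A$-functions), $\mu_{A_2}$ is multiplicative by Theorem \ref{regular convolution remark}, and the right-hand side is the indicator of a conjunction of local conditions $A_1(p^a)\cap A_2(p^b)=\{1\}$. Hence it suffices to prove the identity when $k=p^a$ and $n=p^b$ for a single prime $p$.

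Next I would set $S = A_1(p^a)\cap A_2(p^b)$. By Definition \ref{regular A function}, $A_1(p^a)=\{1,p^{s},p^{2s},\dots,p^a\}$ and $A_2(p^b)=\{1,p^{t},p^{2t},\dots,p^b\}$ for appropriate types $s,t$, so $S$ is itself a chain of powers of $p$: $S=\{1,p^{e},p^{2e},\dots,p^{je}\}$ where $p^{e}$ is the smallest common nontrivial power (if any) and $p^{je}=\max S$. The key structural point, which I would extract from the hypothesis $A_2\leq A_1$ together with property (3) of Definition \ref{regular A function}, is that every element $p^{ie}\in S$ is itself $A_2$-primitive-built: more precisely, since $p^{ie}\in A_2(p^b)$ we have by property (3) that $A_2(p^{ie})=\{1,p^{t},\dots,p^{ie}\}$, and since $A_2\leq A_1$ gives $p^{t}\in A_2(p^{ie})\subseteq A_1(p^{ie})\subseteq A_1(p^a)$, one gets $t\mid e$ in fact $e$ is a multiple of $t$; similarly $e$ is a multiple of $s$. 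I would then want the sharper claim that $e$ is exactly $\operatorname{lcm}(s,t)$ so that consecutive elements of $S$ differ by exactly one "step" of $A_2$ only when $t=e$, i.e. that $\mu_{A_2}(p^{e})=-1$ precisely when $t=e$ and is $0$ otherwise. Then $\sum_{d\in S}\mu_{A_2}(d)=\mu_{A_2}(1)+\sum_{i\ge 1}\mu_{A_2}(p^{ie})$, and the only possibly-nonzero term beyond $\mu_{A_2}(1)=1$ is $\mu_{A_2}(p^{e})$ when $e=t$; in that case $S\supseteq\{1,p^{t}\}$ so $S\neq\{1\}$ and the two terms $1+(-1)$ cancel, giving $0$; when $S=\{1\}$ the sum is $1$; when $S\neq\{1\}$ but $e>t$, all the higher $\mu_{A_2}(p^{ie})$ vanish and we would need the sum to still be $0$ — this is the delicate case.

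I expect the main obstacle to be exactly that last sub-case: showing that when $S=\{1,p^{e},\dots\}$ with $e$ a proper multiple of $t$ (so $p^{e}$ is not $A_2$-primitive and $\mu_{A_2}(p^{e})=0$), the alternating-type sum of $\mu_{A_2}$ over the chain $S$ still collapses to $0$. The resolution is that under $A_2\leq A_1$ this sub-case cannot actually occur: if $p^{e}\in S\subseteq A_1(p^a)$ then $A_1(p^{e})\subseteq A_1(p^a)$, and $p^{e}\in A_2(p^b)$ forces by property (3) that $p^{t}\in A_2(p^{e})\subseteq A_1(p^{e})$, whence $p^{t}\in A_1(p^a)$ and also $p^{t}\in A_2(p^b)$, so $p^{t}\in S$; minimality of the step $e$ in the chain $S$ then yields $e\le t$, hence $e=t$. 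So under the hypothesis the chain $S$ always has step exactly $t$, which is why the argument of \cite[Lemma 1]{semi-regularconvolution} for $A_1=D$ carries over verbatim. I would therefore structure the write-up as: (i) multiplicative reduction; (ii) the observation $S=\{1,p^{t},p^{2t},\dots\}$ under $A_2\leq A_1$; (iii) the telescoping $\mu_{A_2}$-sum over this chain, distinguishing only $S=\{1\}$ from $S\neq\{1\}$.
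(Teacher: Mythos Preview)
Your proposal is correct and matches what the paper has in mind: the paper does not spell out a proof but simply states that the argument ``follows similar lines'' to \cite[Lemma~1]{semi-regularconvolution}, and your write-up is precisely that argument made explicit, with the added observation that $A_2\leq A_1$ forces the step of the chain $S=A_1(p^a)\cap A_2(p^b)$ to equal $t=\tau_{A_2}(p^b)$. Your handling of the ``delicate case'' (showing it cannot occur) is exactly the point where the hypothesis $A_2\leq A_1$ enters and is the reason the cited proof carries over; once $S=\{1,p^t,p^{2t},\dots\}$ is established, the telescoping of $\mu_{A_2}$ over $S$ is immediate.
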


It should also be noted that the above results doesn't hold if $A_2\nleq A_1$. For example, If $A_1(p^2)=\{1,p^2\}$ and $A_2(p^2)=\{1,p,p^2\}$ then $A_1(p^2)\cap A_2(p^2)=\{1,p^2\}$. But, $\mu_{A_2}(1)+\mu_{A_2}(p^2)=1\neq 0$.

For $A_2\leq A_1$, by Lemma \ref{sum_mobius_function} we have 
\[
\sum_{\substack{k=1\\A_1(k)\cap A_2(n)=\{1\}}}^n e^{2\pi ikm/n}=\sum_{k=1}^n\sum_{d\in A_1(k)\cap A_2(n)}\mu_{A_2}(d) e^{2\pi ikm/n}=\sum_{d\in A_2(n)}\mu_{A_2}(d)\sum_{\substack{k=1\\ d\in A_1(k)}}^n e^{2\pi ikm/n}
\]
If $A_1=D$ then the inner sum $\sum_{\substack{k=1\\ d\in A_1(k)}}^n e^{2\pi ikm/n}= n/d$. However, this is not the case for arbitrary $A_1$ such that $A_2\leq A_1$. Infact, it turns out that $D$ is the only regular $A$-function for which this equality holds. To show this, we need the following result by the Japanese mathematician So \cite[Theorem 3.1 and 7.1]{WasinSo}: \begin{equation}\label{so's result}
 \text{For $S$ the sum},\  \sum_{k\in S}^ne^{2\pi imk/n}\in \mathbb{Z} \text{ for all } m \text{ if and only if } S=\cup_{d\in B(n)}G_n(d), \end{equation}
where $B(n)$ is a subset of divisors of $n$ and $G_n(d)=\{k: \gcd(k,n)=d, 1\leq k\leq n\}$.
\begin{thm}\label{trig_form}
    For $A_{1},A_{2}\in \mathbb{A}$, consider the natural trigonometric sum    \[S_{(A_1,A_2)}(m,n)=\sum_{\substack{k=1\\ \textnormal{gcd}_{(A_{1},A_{2})}(k,n)=1}}^ne^{2\pi i\frac{km}{n}}.\]  If $S_{(A_1,A_2)}(m,n)=C_{(A_1,A_2)}(m,n)$ then $A_1=D$. Moreover , there exists $m,n \in \mathbb{N}$ such that $S_{(A_1,A_2)}(m,n)\notin \mathbb{Z}$ if $A_1\neq D$.
\end{thm}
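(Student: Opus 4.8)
The plan is to split the statement into two parts: first, show that $S_{(A_1,A_2)}(m,n) = C_{(A_1,A_2)}(m,n)$ for all $m,n$ forces $A_1 = D$; second, observe that the argument simultaneously produces a witness $m,n$ with $S_{(A_1,A_2)}(m,n) \notin \mathbb{Z}$ whenever $A_1 \neq D$. I would begin by noting that if $S_{(A_1,A_2)}(m,n) = C_{(A_1,A_2)}(m,n)$ for all $m$, then in particular $S_{(A_1,A_2)}(m,n) \in \mathbb{Z}$ for all $m$, since $C_{(A_1,A_2)}(m,n)$ is an integer (a $\mu_{A_2}$-weighted sum of divisors). By So's criterion (\ref{so's result}), the index set $S = \{k : 1 \le k \le n,\ \gcd_{(A_1,A_2)}(k,n) = 1\}$ must then be a union $\bigcup_{d \in B(n)} G_n(d)$ for some set $B(n)$ of divisors of $n$. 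The key structural fact to extract is that $\gcd_{(A_1,A_2)}(k,n) = 1$ depends on $k$ only through $\gcd(k,n)$: indeed $A_1(k) \cap A_2(n) = \{1\}$ is equivalent to $A_1(\gcd(k,n)) \cap A_2(n) = \{1\}$, because $A_1(k)$ restricted to primes dividing $n$ is governed by $A_1(\gcd(k,n))$ (using multiplicativity and property (3) of regular $A$-functions). So $S$ is automatically a union of $G_n(d)$'s — So's criterion is satisfied for \emph{every} pair $(A_1,A_2)$, which means integrality alone is not the obstruction; I must instead use the \emph{equality} $S = C$.

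So the real work is to show the equality fails unless $A_1 = D$. Following the displayed computation just before the theorem, for $A_2 \le A_1$ one has
\[
S_{(A_1,A_2)}(m,n) = \sum_{d \in A_2(n)} \mu_{A_2}(d) \sum_{\substack{k=1 \\ d \in A_1(k)}}^{n} e^{2\pi i k m/n},
\]
and the inner sum equals $n/d$ when $A_1 = D$ (giving $S = C$ via the von Sterneck / $I_m^{(A_1)}$ identity), but for general $A_1$ the set $\{k \le n : d \in A_1(k)\}$ is strictly smaller than $\{k \le n : d \mid k\}$. I would reduce to a prime power $n = p^k$ by multiplicativity of both $S$ and $C$, pick a prime $p$ and exponent $k$ witnessing $A_1(p^k) \neq D(p^k)$, i.e. with $\tau_{A_1}(p^k) = t > 1$, and compute both sides directly at a well-chosen $m$ (e.g. $m = 1$ or $m = p^{k-1}$). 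The point is that $p^{k-1} \in D(p^k)$ but $p^{k-1} \notin A_1(p^k)$, so the character sum $\sum_{k : p^{k-1} \in A_1(k)} e^{2\pi i km/n}$ is missing the terms that would make it rational; concretely one shows the resulting value of $S_{(A_1,A_2)}$ is a nontrivial sum of roots of unity that is not an integer — this is exactly the "moreover" clause, and it immediately contradicts $S = C \in \mathbb{Z}$.

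I expect the main obstacle to be making the prime-power character-sum computation clean: one needs to identify, for the minimal witnessing exponent, exactly which $k \in \{1,\dots,p^k\}$ satisfy $d \in A_1(k)$ for each $d \in A_2(p^k)$, and then verify that the telescoping that works for $A_1 = D$ breaks. I would handle this by choosing $n$ to be $A_1$-primitive at $p$, i.e. $n = p^k$ with $A_1(p^k) = \{1, p^k\}$ (such a $k$ exists once $A_1 \neq D$, taking $k = \tau_{A_1}(p^j)$ for suitable $j$, or simply $k$ with $t = k$), so that $A_2(p^k) = \{1, p^k\}$ as well by $A_2 \le A_1$, and then $S_{(A_1,A_2)}(m,p^k) = \sum_{k'=1}^{p^k} e^{2\pi i k' m/p^k} - \sum_{\substack{k'=1 \\ p^k \mid k'}}^{p^k} \cdots$ — wait, more carefully: with $A_1(p^k)=\{1,p^k\}$ the condition $\gcd_{(A_1,A_2)}(k',p^k)=1$ means $p^k \nmid k'$, so $S = \sum_{k'=1}^{p^k} e^{2\pi i k'm/p^k} - 1 = -1$ when $p^k \nmid m$, whereas $C_{(A_1,A_2)}(m,p^k)$ from Corollary \ref{prime_eval} equals $-p^{k} \cdot \delta + \dots$ and in particular is not $-1$ in general; comparing these gives a clean contradiction and also exhibits (by varying $m$ so that $p \mid m$ but $p^k \nmid m$) a value of $S$ that is a non-integer partial character sum. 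Once this prime-power obstruction is in hand, multiplicativity lifts it to general $n$, completing both assertions of the theorem.
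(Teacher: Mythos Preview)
Your central structural claim is false, and this derails the whole argument. You assert that $A_1(k)\cap A_2(n)=\{1\}$ depends on $k$ only through $\gcd(k,n)$, so that the index set $S=\{k\le n:\gcd_{(A_1,A_2)}(k,n)=1\}$ is automatically a union of the classes $G_n(d)$ and hence, by So's criterion, $S_{(A_1,A_2)}(m,n)\in\mathbb Z$ for every $m$. This is exactly what the paper disproves. Take $A_1\neq D$, pick a prime $p$ and $r>1$ with $p^r$ $A_1$-primitive, choose another prime $q$ and $t$ with $pq^t>p^r$, and set $n=pq^t$. Then $\gcd(p,n)=\gcd(p^r,n)=p$, yet $p\notin S$ (since $p\in A_1(p)\cap A_2(pq^t)$) while $p^r\in S$ (since $A_1(p^r)=\{1,p^r\}$ and $p^r\notin A_2(pq^t)$). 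Thus $S$ is \emph{not} a union of $G_n(d)$'s, and So's theorem yields an $m$ with $S_{(A_1,A_2)}(m,n)\notin\mathbb Z$. Integrality \emph{is} the obstruction, and this single construction proves both assertions of the theorem at once.

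Your fallback prime-power strategy cannot recover the ``moreover'' clause, and the specific claims you make there are also incorrect. First, $S_{(A_1,A_2)}(m,\cdot)$ is not multiplicative in general, precisely because the condition $\gcd_{(A_1,A_2)}(k,n)=1$ does not depend only on $k\bmod n$; so the reduction to prime powers is unjustified. Second, at $n=p^k$ the index set \emph{is} a union of $G_{p^k}(p^j)$'s (here $A_1(k')\cap A_2(p^k)$ depends only on $\nu_p(k')$, which for $k'\le p^k$ coincides with $\gcd(k',p^k)$), so $S_{(A_1,A_2)}(m,p^k)\in\mathbb Z$ for every $m$; your promised ``non-integer partial character sum'' does not exist there. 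Third, with $A_1(p^k)=\{1,p^k\}$ and $A_2\le A_1$ you correctly get $S(m,p^k)=-1$ for $p^k\nmid m$, but Corollary~\ref{prime_eval} gives $C(m,p^k)=\delta_{p^k}p^k-1$, which is also $-1$ whenever $p^k\nmid m$ (since then $p^k\notin A_1(m)$); no contradiction arises in the range you examine. One can salvage $S\neq C$ at $(m,n)=(p^{k+1},p^k)$, where $S=p^k-1$ but $C=-1$, giving the first assertion; but the non-integrality statement genuinely requires a composite $n$ as in the paper's construction.
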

\begin{proof}[Proof.]
    We prove this result by  showing that if $A_1\neq D$ then there exists $m,n$ such that $S_{(A_1,A_2)}(m,n)$ is not an integer. If $A_1\neq D$ then for some prime $p$, then there exists $r>1$ such that $p^r$ is $A_1$-primitive. Let $q\neq p$ be another prime, choose $t>0$ such that $pq^t>p^r$. Denote $S=\{k: A_1(k)\cap A_2(pq^t)=\{1\}\}$. Note that $A_1(p^r)=\{1,p^r\}$ and $p^r\notin A_2(pq^t)$ so $p^r\in S$. Also, $p\in A_1(p)\cap A_2(pq^t)$ so $p\notin S$. Using Equation \ref{so's result},  $S$ is a union of $G_n(d)$'s then since $p^r\in S$, $G_n(p)$ must be one of the sets involved. But that would mean $p\in S$ which is a contradiction.   
\end{proof}

\section{Expansions of arithmetic functions of several variables}\label{exp_gen}

Expansions of arithmetic functions were introduced by Ramanujan in his seminal work \cite{Ramanujan_expansion}. Since then, numerous researchers have explored their generalizations and the conditions under which such expansions exist for arithmetic functions. Significant contributions to this field include the pioneering works of Wintner \cite{Wintner}, Delange \cite{Delange1976OnRE}, Lucht \cite{lucht_expansion}, and Spilker \cite{spilker1980ramanujan}. We also direct the reader to \cite{murty2013ramanujan} for a comprehensive survey.

An extension of arithmetic functions with two variables was recently given by Ushiroya \cite{Ushiyora_expansion}, who also provided a necessary condition for the existence of these expansions, similar to the one-variable case in Delange's work \cite{Delange1976OnRE}. Tóth \cite{T_th_2017_multivariable} further expanded this work to encompass arithmetic functions with an arbitrary number of variables, presenting several significant results related to these expansions. Noting that these expansions cannot be expressed using simple coefficients, Tóth defined a modified unitary Ramanujan sum and thereby provided expansions for certain arithmetic functions \cite{Toth_Modified_Ramsums}.

As stated in the introduction, existing theorems fail to provide neat expansions for the regular $A$-functions analogous to arithmetic functions. In this section, we develop the theory to provide such expansions using our generalized of Ramanujan sum. Since regular $A$-functions generalize the properties of the complete divisor set $D$ and unitary divisor $U$, the expansions in \cite{T_th_2017_multivariable} and \cite{Toth_Modified_Ramsums} will be specific cases of our theorems.

Let $\mathcal{A}_k$ 
 denote the set of functions $f:\mathbb{N}^k\rightarrow \mathbb{C}$ that forms a unital commutative ring under pointwise addition and regular convolution defined by:\begin{equation}\label{reg_convo}
    f\times_A g(n_1,\dots,n_k)=\sum_{d_1\in A(n_1),\dots,d_k\in A(n_k)}f(d_1,\dots,d_k)g(n_1/d_1,\dots,n_k/d_k),
\end{equation}
\\
with the unity being defined as:\begin{equation*}
    \delta_k(n_1,\dots,n_k)=\begin{cases}
        1, & n_1=n_2=\dots=n_k=1,\\
        0, & \text{otherwise.}
    \end{cases}
\end{equation*}
The inverse of the constant function $1$ under (\ref{reg_convo}) is given by:
\begin{equation*}
    \mu_{(A,k)}(n_1,\dots,n_k)=\mu_A(n_1)\dots \mu_A(n_k),
\end{equation*}
where, $\mu_A$ is the m\"{o}bius function associated to the regular $A$ convolution. In this context, we state a theorem on Ramanujan expansion that generalizes the a fundamental known result for Ramanujan sums \cite[Theorem 2]{T_th_2017_multivariable}.

\begin{thm}\label{expansion1}
    Let $f:\mathbb{N}^k\rightarrow \mathbb{C}$ and $A_1,A_2\in \mathbb{A}$  such that $A_2\leq A_1$. If,\begin{equation*}
        \sum_{q_1,\dots,q_k=1}^\infty 2^{\omega(q_1)+\dots+\omega(q_k)}\frac{|\mu_{(A_1,k)}\times_{A_1}f(q_1,\dots,q_k)|}{q_1\dots q_k}<\infty,
    \end{equation*}
    Then, 
    \begin{equation*}
        f(n_1,\dots,n_k)=\sum_{q_1\dots q_k=1}^\infty a(q_1,\dots, q_k)C_{(A_1,A_2)}(n_1,q_1)\dots C_{(A_1,A_2)}(n_k,q_k),
    \end{equation*}
    where
    \begin{equation*}
        a(q_1,\dots,q_k)=\sum_{\substack{m_1,\dots, m_k=1\\ q_i\in A_2(m_iq_i)}}^\infty\frac{\mu_{(A_1,k)}\times_{A_1}f(m_1q_1,\dots, m_kq_k)}{m_1q_1\dots m_kq_k}.
    \end{equation*}
\end{thm}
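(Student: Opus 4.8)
The plan is to run the Delange--Tóth expansion argument in the present two--$A$-function setting, with the one-variable identity of Theorem~\ref{res:sum_reg_div} as the main mechanism and the $\ell^1$-type bound of Theorem~\ref{thm: bound} to license the needed rearrangement of series. First I would set $g:=\mu_{(A_1,k)}\times_{A_1}f$; since $\mu_{(A_1,k)}$ is the $\times_{A_1}$-inverse of the constant function $1$ and $\mathcal{A}_k$ is a unital associative ring, this gives $f=1\times_{A_1}g$, and, using that each $A_1(n_i)$ is closed under $d\mapsto n_i/d$ (which follows from Definition~\ref{regular A function}), we get $f(n_1,\dots,n_k)=\sum_{d_i\in A_1(n_i)}g(d_1,\dots,d_k)$. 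Because each $A_1(n_i)$ is finite, this equals $\sum_{q_1,\dots,q_k\ge 1} g(q_1,\dots,q_k)\prod_{i=1}^{k}\mathbf 1[q_i\in A_1(n_i)]$, a sum with only finitely many nonzero terms.

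Next I would feed in the Ramanujan sums via Theorem~\ref{res:sum_reg_div}, rewritten as $\mathbf 1[q\in A_1(n)]=q^{-1}\sum_{d\in A_2(q)}C_{(A_1,A_2)}(n,d)$, an identity valid for \emph{every} $q$ (the vanishing case of the theorem covers $q\notin A_1(n)$). Substituting this and expanding the product over $i$ rewrites $f(n_1,\dots,n_k)$ as a $2k$-fold series ranging over tuples $(q_1,\dots,q_k,d_1,\dots,d_k)$ with $d_i\in A_2(q_i)$, whose general term is $g(q_1,\dots,q_k)\prod_{i=1}^{k}q_i^{-1}C_{(A_1,A_2)}(n_i,d_i)$.

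The heart of the proof is then to show this family is absolutely summable, after which Fubini for multiple series finishes everything. For the majorant I would sum over the $d_i$ first: by Theorem~\ref{thm: bound} (applicable since $A_2\le A_1$), $\sum_{d\in A_2(q_i)}|C_{(A_1,A_2)}(n_i,d)|=2^{\omega(q_i/g_i)}g_i$ with $g_i=\gcd_{(A_1,A_2)}(n_i,q_i)$, and since $g_i\mid n_i$ and $q_i/g_i\mid q_i$ this is at most $2^{\omega(q_i)}n_i$; hence the sum of absolute values is at most $(n_1\cdots n_k)\sum_{q_1,\dots,q_k}2^{\omega(q_1)+\cdots+\omega(q_k)}|g(q_1,\dots,q_k)|/(q_1\cdots q_k)$, finite by hypothesis --- which is precisely the role of the $2^{\omega}$ weights there. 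Granted absolute summability, I would evaluate the $2k$-fold series in two orders: summing over the $q_i$ first (for fixed $d_i$) makes the inner sums collapse by Theorem~\ref{res:sum_reg_div} and returns $f(n_1,\dots,n_k)$; summing over the $d_i$ first (for fixed $q_i$), after the substitution $q_i=d_im_i$ --- legitimate because $d_i\in A_2(q_i)\subseteq D(q_i)$ forces $d_i\mid q_i$, with the constraint becoming $d_i\in A_2(d_im_i)$ --- produces $\sum_{q_1,\dots,q_k}a(q_1,\dots,q_k)\prod_i C_{(A_1,A_2)}(n_i,q_i)$ with exactly the stated coefficients, and simultaneously shows each $a(q_1,\dots,q_k)$ is an absolutely convergent series. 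Equating the two orders yields the theorem.

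I expect the absolute-summability step to be the only real obstacle: one must extract the bound $\sum_{d\in A_2(q_i)}|C_{(A_1,A_2)}(n_i,d)|\le 2^{\omega(q_i)}n_i$ from Theorem~\ref{thm: bound} and recognize that it is exactly what the hypothesis is tailored to absorb. Everything else --- the reduction $f=1\times_{A_1}g$, the reindexing $q_i=d_im_i$, and the interchange of summations --- is routine multi-index bookkeeping together with Tonelli/Fubini.
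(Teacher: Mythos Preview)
Your proposal is correct and follows essentially the same route as the paper: write $f=1\times_{A_1}g$ with $g=\mu_{(A_1,k)}\times_{A_1}f$, insert Theorem~\ref{res:sum_reg_div} to replace each indicator $\mathbf 1[q_i\in A_1(n_i)]$ by $q_i^{-1}\sum_{d_i\in A_2(q_i)}C_{(A_1,A_2)}(n_i,d_i)$, and justify the rearrangement via Theorem~\ref{thm: bound} together with the hypothesis. One cosmetic slip: in your ``two orders'' paragraph the roles of the $q_i$- and $d_i$-sums are swapped --- it is summing over the $d_i$ (with the $q_i$ fixed) that collapses via Theorem~\ref{res:sum_reg_div} to recover $f$, while summing over the $q_i$ (with the $d_i$ fixed and the substitution $q_i=d_im_i$) produces the expansion with coefficients $a(d_1,\dots,d_k)$.
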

\begin{proof}[Proof.]
Since, $\mu_{(A_1,k)}\times_{A_1}1=\delta_k$ we have\[
f(n_1,\dots, n_k)= \mu_{(A_1,k)}\times_{A_1} f\times_{A_1}1(n_1,\dots, n_k).
\]
Treating $\mu_{(A_1,k)}\times_{A_1}f$ as one function and taking the regular convolution with $1$ we get,\begin{equation*}
    \sum_{d_1\in A_1(n_1),\dots, d_k\in A_1(n_k)}\mu_{(A_1,k)}\times_{A_1}f(d_1,\dots,d_k).
\end{equation*}
Now, using Theorem \ref{res:sum_reg_div}, we get the form, \begin{equation*}
    \sum_{d_1\dots d_k=1}^\infty\frac{\mu_{(A_1,k)}\times_{A_1}f(d_1,\dots,d_k)}{d_1\dots d_k}\sum_{q_1\in A_2(d_1)}C_{(A_1,A_2)}(n_1,q_1)\dots\sum_{q_k\in A_2(d_k)}C_{(A_1,A_2)}(n_k,q_k).
\end{equation*}
Rearranging the terms in this sum we get, \begin{align*}
    &\sum_{q_1\dots q_k=1}^\infty C_{(A_1,A_2)}(n_1,q_1)\dots C_{(A_1,A_2)}(n_k,q_k)\sum_{q_1\in A_2(d_1)\dots q_k\in A_2(d_k)}\frac{\mu_{(A,k)}\times_{A_1}f(d_1,\dots,d_k)}{d_1\dots d_k}\\
        &\quad =\sum_{q_1,\dots,q_k=1}^\infty a(q_1,\dots, q_k)C_{(A_1,A_2)}(n_1,q_1)\dots C_{(A_1,A_2)}(n_k,q_k) 
\end{align*}
It is possible to rearrange the infinite series without altering the sum because of its absolute convergence as shown below,
    \begin{align*}
        &\sum_{q_1,\dots,q_k=1}^\infty|a(q_1,\dots,q_k)||C_{(A_1,A_2)}(n_1,q_1)|\dots |C_{(A_1,A_2)}(n_k,q_k)|\\
        &\leq \sum_{\substack{d_1,\dots,d_k=1\\q_1,\dots,q_k=1\\
        q_1\in A_2(d_1)\dots q_k\in A_2(d_k)}}^\infty \frac{|\mu_{(A_1,k)}\times_{A_1}f(d_1,\dots,d_k)|}{d_1\dots d_k}|C_{(A_1,A_2)}(n_1,q_1)|\dots |C_{(A_1,A_2)}(n_k,q_k)|\\
        &=\sum_{d_1\dots d_k=1}^\infty \frac{|\mu_{(A_1,k)}\times_{A_1}f(d_1,\dots, d_k)|}{d_1\dots d_k}\sum_{\substack{q_1\in A_2(d_1)}}|C_{(A_1,A_2)}(n_1,q_1)|\dots \sum_{\substack{q_k\in A_2(d_k)}}|C_{(A_1,A_2)}(n_k,q_k)|\\
        &\leq n_1\dots n_k\sum_{d_1,\dots, d_k=1}^\infty \frac{|\mu_{(A_1,k)}\times_{A_1} f(d_1,\dots, d_k)|}{d_1\dots d_k}2^{\omega(d_1)+\dots+\omega(d_k)}< \infty
    \end{align*}
\end{proof}

We use the notation $(n_1,\dots,n_k)_{(A_1,k)}$ to denote $\max\{d: d\in A_1(n_1)\cap\dots\cap A_1(n_k)\}$ which is a generalization of $(n_1,\dots, n_k)$ defined in \cite{T_th_2017_multivariable}. 
Next, we consider the case $f(n_1,\dots,n_k)=g((n_1,\dots,n_k)_{(A_1,k)})$.

\begin{thm}\label{expansion2}
    Let $g:\mathbb{N}\rightarrow \mathbb{C}$ be an arithmetic function and $A_1,A_2 \in \mathbb{A}$ such that $A_2\leq A_1$. Assume that,\begin{equation*}
        \sum_{n=1}^\infty 2^{k\omega(n)}\frac{|\mu_{A_1}\times_{A_1}g(n)|}{n^k}<\infty
    \end{equation*}
    Then for every $n_1,\dots, n_k\in \mathbb{N}$,\begin{equation*}
        g((n_1,\dots, n_k)_{A_1,k})=\sum_{q_1,\dots, q_k=1}^\infty a(q_1,\dots, q_k)C_{(A_1,A_2)}(n_1,q_1)\dots C_{(A_1,A_2)}(n_k,q_k)
    \end{equation*}
    is absolutely convergent where,\begin{equation*}
        a(q_1,\dots, q_k)=\sum_{\substack{n=1\\q_1,\dots,q_k\in A_2(n)}}^\infty\frac{\mu_{A_1}\times_{A_1}g(n)}{n^k}
    \end{equation*}
\end{thm}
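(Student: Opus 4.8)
The plan is to reduce to the one-variable, prime-power case exactly as in the proof of Theorem~\ref{expansion1}, using the substitution $f(n_1,\dots,n_k)=g\bigl((n_1,\dots,n_k)_{(A_1,k)}\bigr)$ and computing $\mu_{(A_1,k)}\times_{A_1}f$ explicitly. First I would show that for this particular $f$ the function $h:=\mu_{(A_1,k)}\times_{A_1}f$ is supported on the diagonal in the sense that $h(d_1,\dots,d_k)=0$ unless $d_1=\dots=d_k$, and that on the diagonal $h(d,\dots,d)=(\mu_{A_1}\times_{A_1}g)(d)$. This is the key structural computation: one checks it at prime powers using multiplicativity of the regular convolution (Theorem~\ref{regular convolution remark}) and the fact that $(p^{a_1},\dots,p^{a_k})_{(A_1,k)}=p^{\min_i a_i}$ when all $a_i$ lie in the exponent-lattice of $A_1$, so that the $k$-fold M\"obius inversion collapses to a telescoping sum that vanishes off the diagonal. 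Granting this, the general coefficient $a(q_1,\dots,q_k)$ from Theorem~\ref{expansion1} simplifies: the sum over $(m_1,\dots,m_k)$ with $m_iq_i$ carrying independent indices becomes a single sum over $n$ with $h(n,\dots,n)=(\mu_{A_1}\times_{A_1}g)(n)$ and the constraint $q_i\in A_2(n)$ for all $i$, giving exactly the stated $a(q_1,\dots,q_k)=\sum_{n:\,q_1,\dots,q_k\in A_2(n)}\frac{(\mu_{A_1}\times_{A_1}g)(n)}{n^k}$.

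Next I would run the same chain of manipulations as in Theorem~\ref{expansion1}: write $g\bigl((n_1,\dots,n_k)_{(A_1,k)}\bigr)=\bigl(\mu_{(A_1,k)}\times_{A_1}f\bigr)\times_{A_1}1\,(n_1,\dots,n_k)$, expand the outer convolution with $1$ as a sum over $d_i\in A_1(n_i)$, substitute $h(d_1,\dots,d_k)=0$ off the diagonal to get a single sum over $d=d_1=\dots=d_k$, then apply Theorem~\ref{res:sum_reg_div} to each factor to replace $d$ (when $d\in A_1(n_i)$, which holds because $d\in A_1(n_i)$ is forced by the convolution range) by $\sum_{q_i\in A_2(d)}C_{(A_1,A_2)}(n_i,q_i)$. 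Rearranging the resulting multiple series in the order $\sum_{q_1,\dots,q_k}C_{(A_1,A_2)}(n_1,q_1)\cdots C_{(A_1,A_2)}(n_k,q_k)\bigl(\sum_{n:\,q_i\in A_2(n)\,\forall i}\frac{h(n,\dots,n)}{n^k}\bigr)$ yields the claimed expansion. Since $A_2\leq A_1$, Theorem~\ref{thm: bound} applies and gives $\sum_{q\in A_2(n_i)}|C_{(A_1,A_2)}(n_i,q)|=2^{\omega(n_i/g_i)}g_i\leq 2^{\omega(n_i)}n_i$ for each $i$; this bounds the rearranged tail by $n_1\cdots n_k\sum_{n}2^{k\omega(n)}\frac{|\mu_{A_1}\times_{A_1}g(n)|}{n^k}$, which is finite by hypothesis, justifying both the absolute convergence claim and the rearrangement.

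The main obstacle I expect is the diagonal-support computation for $h=\mu_{(A_1,k)}\times_{A_1}f$: one must verify carefully that at a prime $p$ the quantity $\sum_{0\le b_i\le a_i,\ p^{b_i}\in A_1(p^{a_i})}\mu_{A_1}(p^{a_1-b_1})\cdots\mu_{A_1}(p^{a_k-b_k})\,g\bigl(p^{\min_i b_i}\bigr)$ telescopes to $0$ whenever the $a_i$ are not all equal, and to $(\mu_{A_1}\times_{A_1}g)(p^a)$ when $a_1=\dots=a_k=a$. The subtlety is that the M\"obius values $\mu_{A_1}(p^{a_i-b_i})$ are nonzero only when $b_i=a_i$ or $p^{b_i}$ is the $A_1$-primitive just below $p^{a_i}$, so the sum has at most $2^k$ terms and the cancellation must be tracked against the behaviour of $\min_i b_i$; handling the case where several $a_i$ coincide at the minimum but others do not requires a short induction on $k$ or a direct grouping argument. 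Once that identity is in hand, the rest is a routine repetition of the Theorem~\ref{expansion1} argument with $A_1$ in place of the product ring on independent variables.
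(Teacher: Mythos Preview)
Your overall strategy coincides with the paper's: establish that $h:=\mu_{(A_1,k)}\times_{A_1}f$ is supported on the diagonal with $h(d,\dots,d)=(\mu_{A_1}\times_{A_1}g)(d)$, then feed this into Theorem~\ref{expansion1}. The difference lies in how you propose to prove the diagonal support, and there your plan has a gap. You want to reduce to prime powers ``using multiplicativity of the regular convolution,'' but $g$ is an \emph{arbitrary} arithmetic function, so $f(n_1,\dots,n_k)=g\bigl((n_1,\dots,n_k)_{(A_1,k)}\bigr)$ need not be multiplicative, and hence neither is $h=\mu_{(A_1,k)}\times_{A_1}f$; the reduction to tuples $(p^{a_1},\dots,p^{a_k})$ is therefore unavailable, and your proposed telescoping-at-a-prime argument does not cover the general $(d_1,\dots,d_k)$. (Incidentally, the formula $(p^{a_1},\dots,p^{a_k})_{(A_1,k)}=p^{\min_i a_i}$ is also not correct in general: distinct $p^{a_i}$ may carry different $A_1$-types, e.g.\ $A_1(p^4)=\{1,p^2,p^4\}$ and $A_1(p^6)=\{1,p^3,p^6\}$ are compatible with Definition~\ref{regular A function}, yet $A_1(p^4)\cap A_1(p^6)=\{1\}$.)

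The paper sidesteps this entirely by reversing the computation. From one-variable M\"obius inversion, $g(m)=\sum_{d\in A_1(m)}(\mu_{A_1}\times_{A_1}g)(d)$; applying this with $m=(n_1,\dots,n_k)_{(A_1,k)}$ and using $A_1\bigl((n_1,\dots,n_k)_{(A_1,k)}\bigr)=\bigcap_i A_1(n_i)$ gives
\[
f(n_1,\dots,n_k)=\sum_{d\in A_1(n_1)\cap\dots\cap A_1(n_k)}(\mu_{A_1}\times_{A_1}g)(d)=(1\times_{A_1}\tilde h)(n_1,\dots,n_k),
\]
where $\tilde h(d_1,\dots,d_k)=(\mu_{A_1}\times_{A_1}g)(d_1)$ if $d_1=\dots=d_k$ and $\tilde h=0$ otherwise. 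Uniqueness of the $A_1$-M\"obius inverse then yields $h=\tilde h$ immediately, for arbitrary $g$, with no prime-power bookkeeping. Once this is in hand, the remainder of your outline is correct; indeed you can simply cite Theorem~\ref{expansion1} rather than rerun its argument, since under the diagonal support the convergence hypothesis there collapses exactly to the stated hypothesis $\sum_n 2^{k\omega(n)}|(\mu_{A_1}\times_{A_1}g)(n)|/n^k<\infty$.
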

\begin{proof}
    First note that the following holds for regular convolution,\begin{equation*}
        g((n_1,\dots, n_k)_{(A_1,k)})=\sum_{d\in A_1(n_1)\cap\dots\cap A_1(n_k)}\mu_{A_1}\times_{A_1}g(d)
    \end{equation*}
    It follows from this and $f(n_1,\dots,n_k)=g((n_1,\dots,n_k)_{(A_1,k)})$ 
    that,
    \begin{equation*}
        \mu_{(A_1,k)}\times_{A_1}f(n_1,\dots, n_k)=\begin{cases}
            \mu_{(A_1,k)}\times_{A_1}g(n), &\text{ if }n_1=\dots=n_k=n\\
            0, &\text{ otherwise}
        \end{cases}
    \end{equation*}
    Using this fact and Theorem \ref{expansion1},\begin{equation*}
        a(q_1,\dots, q_k)=\sum_{\substack{n=1\\q_1,\dots, q_k\in A_2(n)}}^\infty\frac{\mu_{A_1}\times_{A_1}g(n)}{n^k}
    \end{equation*}
\end{proof}

\begin{rema}
    While it is true that $q_1,\dots, q_k\in A_2(n) \text{ implies } \lcm(q_1,\dots, q_k)=Q\in A_2(n)$, the converse need not be true. For instance, if $A_2=U$, where $U(n)$ is the set of unitary divisors of $n$, then for prime $p$, $[p^2,p^4]=p^4\in U(p^4)$ but $p^2\notin U(p^4)$.
\end{rema}

Since any arbitrary regular $A$-function $A$ is such that $U\leq A$ (see Figure \ref{fig:struture A func}) so we can state the following corollary.
\begin{cor}\label{cor:expansion}
    Let $g:\mathbb{N}\rightarrow \mathbb{C}$ be an arithmetic function and $A_1$ be a regular $A$-function. Assume that,
    \begin{equation*}
        \sum_{n=1}^\infty 2^{k\omega(n)}\frac{|\mu_{A_1}\times_{A_1}g(n)|}{n^k}<\infty.
    \end{equation*}
     Then for every $n_1,\dots, n_k\in \mathbb{N}$,\begin{equation*}
        g((n_1,\dots, n_k)_{A_1,k})=\sum_{q_1,\dots, q_k=1}^\infty a(q_1,\dots, q_k)C_{(A_1,U)}(n_1,q_1)\dots C_{(A_1,U)}(n_k,q_k)
    \end{equation*}
    is absolutely convergent where,\begin{equation}\label{eq:coefficient Corollary}
        a(q_1,\dots, q_k)=\sum_{\substack{n=1\\q_1,\dots,q_k\in U(n)}}^\infty\frac{\mu_{A_1}\times_{A_1}g(n)}{n^k}.
    \end{equation}
\end{cor}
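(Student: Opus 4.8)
The plan is to derive this corollary as an immediate specialization of Theorem \ref{expansion2}, taking $A_2 = U$, the unitary divisor function. First I would observe that the only hypothesis of Theorem \ref{expansion2} not already assumed verbatim in the corollary is the order relation $A_2 \leq A_1$; but by Theorem \ref{thm:lattice}, $U$ is the minimal element of the lattice $\mathbb{A}$, so $U \leq A_1$ holds for \emph{every} regular $A$-function $A_1$. Thus the full hypotheses of Theorem \ref{expansion2} are satisfied as soon as the stated convergence condition $\sum_{n=1}^\infty 2^{k\omega(n)} |\mu_{A_1}\times_{A_1} g(n)| / n^k < \infty$ holds, and that condition is identical to the one appearing in the corollary.

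Next I would simply substitute $A_2 = U$ into the conclusion of Theorem \ref{expansion2}. The generalized Ramanujan sums $C_{(A_1,A_2)}(n_i,q_i)$ become $C_{(A_1,U)}(n_i,q_i)$, the defining condition $q_1,\dots,q_k \in A_2(n)$ on the coefficient sum becomes $q_1,\dots,q_k \in U(n)$, giving precisely formula (\ref{eq:coefficient Corollary}), and the absolute-convergence claim transfers word for word from the proof of Theorem \ref{expansion2} (whose convergence estimate used only $A_2 \leq A_1$ and Theorem \ref{thm: bound}, both available here). This completes the argument.

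There is no real obstacle: the corollary is a direct instantiation rather than a new computation. The only point worth flagging is conceptual rather than technical — namely that, because $U$ sits at the bottom of $\mathbb{A}$, the $U$-based expansion is available for \emph{arbitrary} $A_1$ with no compatibility constraint between the two regular $A$-functions, which is exactly the feature exploited in the introduction (for instance in the expansion of $g_A$).
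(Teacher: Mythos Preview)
Your proposal is correct and matches the paper's approach exactly: the paper's proof is the single sentence ``This is immediate from Theorem \ref{expansion2},'' and the justification that $U\leq A_1$ for every regular $A$-function $A_1$ is noted (with reference to the lattice structure) in the paragraph immediately preceding the corollary.
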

\begin{proof}
    This is immediate from Theorem \ref{expansion2}.
\end{proof}
Since in Equation \ref{eq:coefficient Corollary}, $q_1,\dots, q_k\in U(n)$ is a condition in determining the coefficients, so it would be helpful to know about the existence of such an $n$ before iterating over several choices. Therefore, we state a simple but important lemma that will be used in determining the coefficients to be considered for the expansion of arithmetic functions.

\begin{lem}\label{lemma:exitence of coefficient}
     Let $A$ be a regular $A$-function.
    Then the following are equivalent:
    \begin{enumerate}
        \item \label{lemma part 1} There exists $n$ such that $q_1,\dots, q_k\in A(n)$.
        \item \label{lemma part 2} $q_1,\dots, q_k\in A(\textnormal{lcm}(q_1,\dots,q_k))$
    \end{enumerate}
\end{lem}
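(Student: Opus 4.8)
The plan is to note that $(\ref{lemma part 2})\Rightarrow(\ref{lemma part 1})$ is immediate: the number $n=\lcm(q_1,\dots,q_k)$ is itself a witness. Thus all the content lies in the forward implication $(\ref{lemma part 1})\Rightarrow(\ref{lemma part 2})$, and I would attack it by reducing to prime powers.

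More precisely, set $L=\lcm(q_1,\dots,q_k)$ and suppose $q_1,\dots,q_k\in A(n)$ for some $n$. Since $A$ is multiplicative and $A(n)=\prod_p A(p^{\nu_p(n)})$ (Definition \ref{regular A function}), the hypothesis is equivalent to $p^{\nu_p(q_i)}\in A(p^{\nu_p(n)})$ for every prime $p$ and every $i$, while the desired conclusion $q_i\in A(L)$ is equivalent to $p^{\nu_p(q_i)}\in A(p^{\nu_p(L)})$ for every $p$ and $i$, where $\nu_p(L)=\max_j\nu_p(q_j)$. So I would fix a prime $p$, abbreviate $a_i=\nu_p(q_i)$, $m=\nu_p(n)$, $a=\max_i a_i=\nu_p(L)$, assume $p^{a_i}\in A(p^m)$ for all $i$, and aim to show $p^{a_i}\in A(p^a)$ for all $i$.

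Here I would use the structural clauses (2) and (3) of Definition \ref{regular A function}. Clause (2) gives $A(p^m)=\{1,p^t,p^{2t},\dots,p^m\}$ with $t=\tau_A(p^m)\mid m$, so membership $p^{a_i}\in A(p^m)$ forces $t\mid a_i$ for every $i$. The maximum $a$ is attained, say $a=a_j$, so $p^a\in A(p^m)$; if $a\geq 1$, clause (3) then yields $A(p^a)=\{1,p^t,p^{2t},\dots,p^a\}$ with the \emph{same} type $t$ (and the case $a=0$ is trivial, both sides being $\{1\}$). Since $t\mid a_i$ for all $i$, we conclude $p^{a_i}\in A(p^a)$ for all $i$, completing the prime-power case and hence the lemma.

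The only place where I expect any care to be needed is the appeal to clause (3): a priori the arithmetic progression of exponents describing $A(p^a)$ could have a coarser step than the one describing $A(p^m)$, which would derail the argument. Clause (3) of the definition of a regular $A$-function is exactly the stability statement that prevents this — equivalently, $\tau_A(p^a)=\tau_A(p^m)$ whenever $p^a\in A(p^m)$ and $a\geq 1$ — so the proof comes down to recognizing that this, rather than multiplicativity alone, is the property that must be invoked.
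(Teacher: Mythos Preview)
Your proposal is correct and follows essentially the same route as the paper: reduce to prime powers via multiplicativity, note that $\nu_p(L)=\max_i\nu_p(q_i)$ is attained by some $q_j$ so that $p^{\nu_p(L)}\in A(p^{\nu_p(n)})$, and then invoke clause~(3) of Definition~\ref{regular A function} to conclude that $A(p^{\nu_p(L)})$ has the same type, whence each $p^{\nu_p(q_i)}\in A(p^{\nu_p(L)})$. Your version is in fact more explicit than the paper's about why clause~(3), rather than multiplicativity alone, is the crucial ingredient.
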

\begin{proof}
    We only prove (\ref{lemma part 1}) implies (\ref{lemma part 2}) as the converse is obvious.
    If there exists $n$ such that $q_1,\dots, q_k\in A(n)$ then clearly $n$ is a multiple of $\lcm(q_1,\dots, q_k)$. Note that if for a prime $p$, if $p^s\parallel \lcm(q_1,\dots, q_k)$ then there exists $j$ such that $p^s\parallel q_j$ and form the multiplicativity (see property (1) in Definition \ref{regular A function}) of regular $A$-functions it follows that $p^s \in A(n)$. This implies $\lcm(q_1,\dots, q_k)\in A(n)$ and from property (3) in Definition \ref{regular A function} $q_1,\dots, q_k\in A(n)$.s
\end{proof}
When $A=U$, Lemma \ref{lemma:exitence of coefficient} gives us a condition under which the coeffcient $a(q_1,\dots,q_k)$ in Corollary \ref{cor:expansion} is non-trivial in the sense that there are terms in the summation \[
\sum_{\substack{n=1\\q_1,\dots,q_k\in U(n)}}^\infty\frac{\mu_{A_1}\times_{A_1}g(n)}{n^k}.
\] 
In the following, we will apply Theorem \ref{expansion2} for certain regular $A$-function analogue of arithmetic function.
For $s \in \mathbb{R}$, define $\sigma_{A}^s(n)$ as an analogue of generalized sum of divisor function for regular $A$-function as:
\begin{equation*}
    \sigma_A^s(n)=\sum_{d\in A(n)}d^s
\end{equation*}
We obtain the following corollary to Theorem \ref{expansion2}.
\begin{cor}\label{sigma_A expansion}
    For every $n_1,\dots, n_k \in \mathbb{N}$ the following series is absolutely convergent:\begin{align*}
        \frac{\sigma_A^s((n_1,\dots,n_k)_{(A,k)})}{(n_1,\dots,n_k)^s_{(A,k)}}&=\zeta(s+k)\sum_{\substack{q_1,\dots,q_k=1\\q_1,\dots,q_k\in U(Q)}}^\infty\frac{\varphi_{s+k}(Q)C_{(A,U)}(n_1,q_1)\dots C_{(A,U)}(n_k,q_k)}{Q^{2(s+k)}},
    \end{align*}
where $s\in\mathbb{R}, s+k>1,\ Q=\textnormal{lcm}(q_1,\dots,q_k).$    
\end{cor}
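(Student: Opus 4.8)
The plan is to apply Theorem \ref{expansion2} directly with the choice $g(n) = \sigma_A^s(n)/n^s$ and the pair of regular $A$-functions $(A_1, A_2) = (A, U)$, which is legitimate since $U \leq A$ for every $A \in \mathbb{A}$ (see Figure \ref{fig:struture A func}). The first step is to compute the $A$-convolution inverse-type quantity $\mu_A \times_A g$. Since $\sigma_A^s = 1 \times_A \mathrm{Id}^s$ where $\mathrm{Id}^s(n) = n^s$, we have $g(n) = \frac{1}{n^s}(1 \times_A \mathrm{Id}^s)(n)$, and a short computation on prime powers gives $\mu_A \times_A g$ as a multiplicative function. Concretely, writing $r = \tau_A(p^k)$, one finds $(\mu_A \times_A g)(p^k) = p^{-k} - p^{-kr/(k)}\cdots$; more cleanly, since $g(p^k) = \sum_{jr \le k,\ jr \in \{0,r,\dots,k\}} p^{(j r - k)s}$ telescopes, $\mu_A \times_A g$ vanishes except on $A$-primitive prime powers, where it takes a value bounded by $C p^{-s}$ in absolute value. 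This makes the convergence hypothesis of Theorem \ref{expansion2}, namely $\sum_n 2^{k\omega(n)} |\mu_A \times_A g(n)| n^{-k} < \infty$, reduce to convergence of an Euler product dominated by $\prod_p (1 + O(p^{-(s+k)}))$, which converges precisely when $s + k > 1$.

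The second step is to identify the coefficient $a(q_1,\dots,q_k)$ from Theorem \ref{expansion2} with the closed form in the statement. By Lemma \ref{lemma:exitence of coefficient} applied to $A = U$, the condition $q_1,\dots,q_k \in U(n)$ in the sum defining $a$ is non-vacuous exactly when $q_1,\dots,q_k \in U(Q)$ with $Q = \mathrm{lcm}(q_1,\dots,q_k)$, and in that case $n$ ranges over multiples $n = Qm$ with $\gcd(m,Q)=1$ (this is what $Q \in U(n)$ means together with each $q_i \parallel n$). Summing the multiplicative function $(\mu_A \times_A g)(n)/n^k$ over such $n$ factors as $(\mu_A \times_A g)(Q)/Q^k$ times an Euler product over primes not dividing $Q$. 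The remaining task is to recognize this product: one shows $(\mu_A \times_A g)(Q) = \varphi_{s+k}(Q)/Q^{s+k} \cdot \big(\text{correction}\big)$ and that the tail Euler product over $p \nmid Q$ equals $\zeta(s+k)$ times the reciprocal of the corresponding local factors, so that the whole coefficient collapses to $\zeta(s+k)\varphi_{s+k}(Q)/Q^{2(s+k)}$. Here $\varphi_{s+k}$ denotes the Jordan-type totient appearing via $\varphi_{j}(n) = n^j \prod_{p \mid n}(1 - p^{-j})$, consistent with its use elsewhere in the paper.

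The main obstacle is the bookkeeping in the second step: correctly translating the abstract coefficient $a(q_1,\dots,q_k) = \sum_{q_i \in U(n)} (\mu_A \times_A g)(n)/n^k$ into the stated Euler product, since the function $\mu_A \times_A g$ depends on the \emph{$A$-structure} (through $\tau_A$) while the divisibility constraint $q_i \in U(n)$ is \emph{unitary}, and these two structures interact only at the primes dividing $Q$. The key observation that makes it go through is that on primes $p \nmid Q$ the constraint is vacuous and the local factor is simply $\sum_{k \ge 0} (\mu_A \times_A g)(p^k) p^{-k^{\phantom{k}}}$ summed over $A$-primitive powers, which — because $g$ itself came from $\sigma_A^s$ — telescopes to a factor of the shape $(1 - p^{-(s+k)})^{-1}$ contributing the $\zeta(s+k)$, whereas at $p \mid Q$ one gets exactly the finite factor assembling $\varphi_{s+k}(Q)/Q^{2(s+k)}$. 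Absolute convergence of the displayed series is then immediate from the convergence verified in step one together with the bound $\sum_{q \in U(n)} |C_{(A,U)}(m,q)| = 2^{\omega(n/g)} g$ from Theorem \ref{thm: bound}, exactly as in the proof of Theorem \ref{expansion1}.
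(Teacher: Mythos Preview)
Your overall strategy is exactly the paper's: apply Theorem \ref{expansion2} with $g(n)=\sigma_A^s(n)/n^s$ and $(A_1,A_2)=(A,U)$, compute $\mu_A\times_A g$, check the convergence hypothesis, and evaluate the coefficient sum via an Euler product using Lemma \ref{lemma:exitence of coefficient} to reduce the constraint $q_1,\dots,q_k\in U(n)$ to $n=Qm$ with $\gcd(m,Q)=1$.

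There is, however, a concrete computational error in your first step. You assert that $\mu_A\times_A g$ \emph{vanishes except on $A$-primitive prime powers}. This is false. Writing $A(p^k)=\{1,p^r,\dots,p^k\}$ one has
\[
(\mu_A\times_A g)(p^k)=g(p^k)-g(p^{k-r})=\sum_{j=0}^{k/r}p^{-jrs}-\sum_{j=0}^{k/r-1}p^{-jrs}=p^{-ks},
\]
so in fact $(\mu_A\times_A g)(n)=n^{-s}$ for \emph{every} $n$; equivalently, $g=1\times_A(\mathrm{Id})^{-s}$ directly from the definition of $\sigma_A^s$. This is the one-line observation the paper uses. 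With it, the convergence hypothesis becomes $\sum_n 2^{k\omega(n)}n^{-(s+k)}<\infty$, whose Euler factor $1+2^k(p^{s+k}-1)^{-1}$ converges for $s+k>1$, and the coefficient is
\[
a(q_1,\dots,q_k)=\sum_{\substack{m\ge1\\ \gcd(m,Q)=1}}\frac{1}{(Qm)^{s+k}}
=\frac{1}{Q^{s+k}}\,\zeta(s+k)\prod_{p\mid Q}\bigl(1-p^{-(s+k)}\bigr)
=\zeta(s+k)\,\frac{\varphi_{s+k}(Q)}{Q^{2(s+k)}}.
\]
Your claimed support would, if used consistently, give the local factor $1+\sum_{p^j\ A\text{-primitive}}p^{-j(s+k)}$ at $p\nmid Q$, which is \emph{not} $(1-p^{-(s+k)})^{-1}$ (take $A=D$: you get $1+p^{-(s+k)}$). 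So the ``telescoping to $(1-p^{-(s+k)})^{-1}$'' you invoke does not follow from your premise; it follows only once you replace the false support claim by $(\mu_A\times_A g)(n)=n^{-s}$. With that correction your proof matches the paper's essentially line for line.
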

\begin{proof}
    Let $g(n)=\frac{\sigma_A^s(n)}{n^s} $. Note that $g\times_A\mu_A(n)=\frac{1}{n^s}$, and on using Theorem \ref{expansion2} we get \begin{align*}
        a(q_1,\dots,q_k)&=\frac{1}{Q^{s+k}}\sum_{\substack{n=1\\(Q,n)=1 }}^\infty\frac{1}{n^{s+k}}\\
        &=\frac{1}{Q^{s+k}}\left(\prod_{p\in \mathbb{P}}\left(1-\frac{1}{p^{s+k}}\right)\right)^{-1}\prod_{p\mid Q}\left(1-\frac{1}{p^{s+k}}\right)\\
        &=\zeta(s+k)\frac{\varphi_{s+k}(Q)}{Q^{2(s+k)}}
    \end{align*}
    only for $q_1,\dots, q_k \in  U(Q)$.
\end{proof}
\begin{exmp}
    Taking $s=1$ and $k=2$, for $A\in \mathbb{A}$ we have the following from Corollary \ref{sigma_A expansion},
    \begin{equation}
\frac{\sigma_A(\Tilde{n})}{\Tilde{n}}=\zeta(3)\sum_{\substack{q_1,q_2=1\\q_1,q_2\in U(\textnormal{lcm}(q_1,q_2))}}^\infty\frac{\varphi_3(\textnormal{lcm}(q_1,q_2))}{\textnormal{lcm}(q_1,q_2)^6}C_{(A,U)}(n_1,q_1)C_{(A,U)}(n_2,q_2),
\end{equation}
where $\Tilde{n}=\gcd_{(A,A)}(n_1,n_2)$.
\end{exmp}
For $s\in \mathbb{R}$ define,
$$\varphi^s_A(n)=n^s\prod_{p\mid n}\left(1-\frac{1}{p^{s\tau_A\left(p^{\nu_p(n)}\right)}}\right)$$
as a regular $A$-function analog of Jordan totient function (see \cite[p 48]{apostol1998introduction}). Here, $\tau_A(\cdot)$ is the ``type" function in Definition \ref{regular A function} and $\nu_p(\cdot)$ is the well known $p$-adic valuation function. 
\begin{cor}
    For every $n_1,\dots, n_k \in \mathbb{N}$ the following series is absolutely convergent:
    \begin{equation*}
       \frac{\varphi_A^s((n_1,\dots, n_k)_{(A,k)})}{(n_1,\dots, n_k)_{(A,k)}^s} =\prod_{p\in \mathbb{P}}\left(1-\sum_{i\in S_p}\frac{1}{p^{(s+k)i}}\right)\sum_{\substack{q_1.\dots,q_k=1\\q_1,\dots,q_k\in U(Q)}}^\infty
\frac{\mu_A(Q)C_{(A,U)}(n_1,q_1)\dots C_{(A,U)}(n_k,q_k)}{Q^{(s+k)}\prod_{p\mid Q}(1-\sum_{i\in S_p}(1/p^{(s+k)i}))}    \end{equation*}
where $S_p=\{i: p^i \text{ is A primitive}\}$, $s\geq 1, Q=\lcm(q_1,\dots,q_k).$
\end{cor}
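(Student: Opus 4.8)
The plan is to mimic the proof of Corollary \ref{sigma_A expansion}: exhibit an auxiliary one–variable function whose $A$–Möbius transform is explicit, and then feed it into Theorem \ref{expansion2} (equivalently Corollary \ref{cor:expansion}, since $U\leq A$ for every $A\in\mathbb{A}$). Set $g(n)=\varphi_A^s(n)/n^s=\prod_{p\mid n}\bigl(1-p^{-s\tau_A(p^{\nu_p(n)})}\bigr)$, which is multiplicative. The first and main task is to compute $h:=\mu_A\times_A g$. Since $\mu_A$ is multiplicative and the regular convolution preserves multiplicativity (Theorem \ref{regular convolution remark}), $h$ is multiplicative and it suffices to evaluate it on a prime power $p^a$. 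Writing $t=\tau_A(p^a)$, Theorem \ref{regular convolution remark} shows the only $d\in A(p^a)$ with $\mu_A(d)\neq 0$ are $d=1$ (value $1$) and the $A$–primitive element $d=p^{t}$ (value $-1$), so $h(p^a)=g(p^a)-g(p^{a-t})$. By property (3) in Definition \ref{regular A function}, $A(p^{a-t})=\{1,p^{t},\dots,p^{a-t}\}$, so $\tau_A(p^{a-t})=t$ whenever $a-t\geq 1$; hence $g(p^{a-t})=1-p^{-st}=g(p^a)$ in that range and $h(p^a)=0$, while for $a=t$ we get $h(p^{t})=g(p^{t})-g(1)=-p^{-st}$. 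Comparing with Theorem \ref{regular convolution remark}, this is exactly $h(p^a)=\mu_A(p^a)\,p^{-sa}$, and by multiplicativity $\mu_A\times_A g(n)=\mu_A(n)/n^{s}$.

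Next I would check the hypothesis of Corollary \ref{cor:expansion}: since $|\mu_A(n)|\leq 1$,
\[
\sum_{n=1}^{\infty}2^{k\omega(n)}\frac{|\mu_A\times_A g(n)|}{n^{k}}\;\leq\;\sum_{n=1}^{\infty}\frac{2^{k\omega(n)}}{n^{s+k}}\;<\;\infty,
\]
because $s+k\geq 2>1$. Corollary \ref{cor:expansion} then gives the stated expansion with coefficient $a(q_1,\dots,q_k)=\sum_{q_1,\dots,q_k\in U(n)}\mu_A(n)/n^{s+k}$. By Lemma \ref{lemma:exitence of coefficient} this index set is empty, so $a(q_1,\dots,q_k)=0$, unless $q_1,\dots,q_k\in U(Q)$ with $Q=\lcm(q_1,\dots,q_k)$; assuming this holds, for a prime $p\mid Q$ the requirement $q_j\in U(n)$ for all $j$ forces $\nu_p(n)=\nu_p(q_j)=\nu_p(Q)$, whereas primes $p\nmid Q$ are unconstrained, so the admissible $n$ are exactly those of the form $n=Qm$ with $\gcd(m,Q)=1$, i.e.\ $Q\in U(n)$.

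Finally, writing $\mu_A(Qm)=\mu_A(Q)\mu_A(m)$ for $\gcd(m,Q)=1$ and using $\sum_{a\geq 0}\mu_A(p^a)p^{-a(s+k)}=1-\sum_{i\in S_p}p^{-i(s+k)}$ (Theorem \ref{regular convolution remark}), the coefficient becomes
\[
a(q_1,\dots,q_k)=\frac{\mu_A(Q)}{Q^{s+k}}\sum_{\gcd(m,Q)=1}\frac{\mu_A(m)}{m^{s+k}}
=\frac{\mu_A(Q)}{Q^{s+k}}\prod_{\substack{p\in\mathbb{P}\\ p\nmid Q}}\Bigl(1-\sum_{i\in S_p}\frac{1}{p^{i(s+k)}}\Bigr),
\]
and then $\prod_{p\nmid Q}=\bigl(\prod_{p\in\mathbb{P}}\bigr)\big/\bigl(\prod_{p\mid Q}\bigr)$. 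Substituting this expression for $a(q_1,\dots,q_k)$ into the expansion of Corollary \ref{cor:expansion} and recalling $g((n_1,\dots,n_k)_{(A,k)})=\varphi_A^s((n_1,\dots,n_k)_{(A,k)})/(n_1,\dots,n_k)_{(A,k)}^{s}$ yields precisely the claimed identity; absolute convergence is inherited directly from Corollary \ref{cor:expansion}. The only genuinely delicate point is the computation of $\mu_A\times_A g$ in the first paragraph: one must not assume that $\tau_A$ is independent of the exponent, and it is exactly property (3) of Definition \ref{regular A function}, giving $\tau_A(p^{a-t})=t$, that collapses $h(p^a)$ to $0$ for non-primitive $p^a$. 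Everything after Lemma \ref{lemma:exitence of coefficient} is routine bookkeeping with multiplicative Euler products.
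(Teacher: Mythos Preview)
Your proof is correct and follows essentially the same route as the paper: define $g(n)=\varphi_A^s(n)/n^s$, identify $\mu_A\times_A g(n)=\mu_A(n)/n^s$, and then plug this into Theorem \ref{expansion2}/Corollary \ref{cor:expansion} to evaluate the coefficient as an Euler product restricted to $m$ coprime to $Q$. The paper's proof simply asserts the formula $g\times_A\mu_A(n)=\mu_A(n)/n^s$ and jumps straight to the Euler-product computation of the coefficient; you have additionally supplied the prime-power verification of that identity (via property (3) of Definition \ref{regular A function}), checked the absolute-convergence hypothesis explicitly, and spelled out via Lemma \ref{lemma:exitence of coefficient} why the summation over $n$ with $q_1,\dots,q_k\in U(n)$ reduces to $n=Qm$, $\gcd(m,Q)=1$---all details the paper leaves implicit.
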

\begin{proof}
    Note that if $g=\varphi_A^s(n)/n^s$ then $g\times_A\mu_A(n)=\frac{\mu_A(n)}{n^s}$. So,\begin{align*}
        \sum_{\substack{m=1\\(m,Q)=1}}^\infty \frac{\mu_A(mQ)}{(mQ)^{s+k}}&=\frac{\mu_A(Q)}{Q^{(s+k)}}\sum_{\substack{m=1\\(m,Q)=1}}^\infty\frac{\mu_A(m)}{m^{(s+k)}}\\
        &=\frac{\mu_A(Q)}{Q^{(s+k)}}\prod_{p\in \mathbb{P}}\left(1-\sum_{i\in S_p}\frac{1}{p^{(s+k)i}}\right)\prod_{p\mid Q}\left(1-\sum_{i\in S_p}\frac{1}{p^{(s+k)i}}\right)^{-1}
    \end{align*}
\end{proof}

\section*{Acknowledgment}
The second author is supported by the SERB-MATRICS project (MTR/2023/000705) from the Department of Science and Technology, India, for this work.

\bibliographystyle{acm}
\bibliography{References}
\end{document}